\newtheorem{theorem}{Theorem}[section]
\newtheorem{lemma}{Lemma}[section]
\newcommand{\Alt}{\mathrm{Alt}}
\newcommand{\Sym}{\mathrm{Sym}}
\newcommand{\PSL}{\mathrm{PSL}}
\newcommand{\PGL}{\mathrm{PGL}}
\newcommand{\PG}{\mathrm{PG}}
\newcommand{\PSU}{\mathrm{PSU}}
\newcommand{\PSO}{\mathrm{PSO}}
\newcommand{\POmega}{\mathrm{P\Omega}}
\newcommand{\PGaL}{\mathrm{P\Gamma L}}
\newcommand{\FF}{\mathbb{F}}
\newcommand{\set}[1]{\left\{ #1 \right\}}
\title{On chiral polytopes having a group $\PSL(3,q)$ as automorphism group}
\author[Dimitri Leemans]{Dimitri Leemans}
\address{Dimitri Leemans, D\'epartement de Math\'ematique, Universit\'e libre de Bruxelles, C.P.216, Boulevard du Triomphe, 1050 Brussels, Belgium}
\email{dleemans@ulb.ac.be}
\author[Adrien Vandenschrick]{Adrien Vandenschrick}
\address{Adrien Vandenschrick, FNRS-FRS Research Fellow, D\'epartement de Math\'ematique, Universit\'e libre de Bruxelles, C.P.216, Boulevard du Triomphe, 1050 Brussels, Belgium}
\email{adrien.vandenschrick@ulb.ac.be}
\date{\today}
\begin{document}

\maketitle

\begin{abstract}
    For each prime power $q\geq 5$, we construct a rank four chiral polytope that has a group $\PSL(3,q)$ as automorphism group and Schl\"afli type $\{q-1,\frac{2(q-1)}{(3,q-1)},q-1\}$. We also construct rank five polytopes for some values of $q$ and we show that there is no chiral polytope of rank at least six having a group $\PSL(3,q)$ or $\PSU(3,q)$ as automorphism group.
\end{abstract}
\section{Introduction}

The finite simple groups have for a long time been a subject of investigation. Despite the Classification Theorem, one has to admit that a lot has yet to be understood regarding them. For instance, there is still the need to find a unified geometric interpretation of these groups. The Theory of Buildings, due to Jacques Tits (see for instance~\cite{Tits74}) permits to associate a geometric object to most of the nonabelian simple groups. Alternating groups and sporadic groups are however not covered by that theory.

It is known which finite simple groups are automorphism groups of abstract regular polytopes. Indeed, Nuzhin~\cite{n1,n2,n3,n4} and Mazurov~\cite{Mazurov03} showed that every non-abelian finite simple group can be generated by three involutions, two of which commute, with the following exceptions:
\[
\begin{array}{l}
\PSL(3,q),\, \PSU(3,q),\, \PSL(4,2^n), \, \PSU(4,2^{n}),\\
\Alt(6), \, \Alt(7), 
\, M_{11},\, M_{22}, \,M_{23}, \,McL.
\end{array}
\]
The groups $\mathrm{PSU(4,3)}$ and $\mathrm{PSU(5,2)}$, although mentioned by Nuzhin as being generated by three involutions, two of which commute, have been discovered not to have such generating sets by Martin Macaj and Gareth Jones (personal communication).
Recently, Vandenschrick~\cite{Adrien} showed that all the exceptions in rank three are also not automorphism groups of abstract regular polytopes of higher ranks.

In the chiral case, Leemans and Liebeck~\cite{LL2017} determined that all finite simple groups are automorphism groups of abstract chiral polyhedra with the following exceptions:
\[
\begin{array}{l}
 \PSL(2,q), \, \PSL(3,q), \, \PSU(3,q), \, \Alt(7).
 \end{array}
\]
The group $\Alt(7)$ is known not to be the automorphism group of any chiral polytope (see for instance~\cite{HHL}).
It was already known by Macbeath~\cite{Macbeath} that the groups $\PSL(2,q)$ are not automorphism groups of chiral polyhedra. Leemans, Moerenhout and O'Reilly-Regueiro~\cite{LM2017} showed that there are no chiral polytopes of rank five or higher having a group $\PSL(2,q)$ as automorphism group. They also showed that for most values of $q$, the group $\PSL(2,q)$ is the automorphism group of a rank four chiral polytope.

The groups $\PSL(3,q)$ and $\PSU(3,q)$ have been shown not to be automorphism groups of chiral polyhedra by Breda and Catalano~\cite{BC2019}.



We show in this paper that for $q\geq 5$, the group $\PSL(3,q)$ is the automorphism group of at least one chiral polytope of rank four. 

\begin{theorem}\label{rank4}
Let $\FF_q$ ($q \geq 5$) be the finite field of order $q$.
    The following elements of $\PSL(3,q)$, with $x$ a primitive element of $\FF_q$, generate a chiral polytope of rank four.
    \[
    \sigma_1 = \begin{pmatrix}
    x & 1 & 0 \\
    0 & 1 + x^{-1} & -x^{-1} \\
    0 & 1 & 0
    \end{pmatrix},\ 
    \sigma_2 = \begin{pmatrix}
    -x^{-1} & 0 & 0 \\
    0 & 0 & 1 \\
    0 & x & 0
    \end{pmatrix}
    \text{and }
    \sigma_3 = \begin{pmatrix}
    x & 0 & 0 \\
    x - x^{-1} & 1 & 0 \\
    x - x^{-1} & 0 & x^{-1}
    \end{pmatrix}
    \]
    Moreover, $\left\langle \sigma_1,\ \sigma_2 \right\rangle$ is the stabiliser of two lines of the projective plane $\PG(2,q)$ while $\left\langle \sigma_2,\ \sigma_3 \right\rangle$ is the stabiliser of two points. 
    Finally, this polytope is self-dual of Schl\"afli type $\{q-1,\frac{2(q-1)}{(3,q-1)},q-1\}$.

\end{theorem}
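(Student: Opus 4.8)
The plan is to establish, in turn, all the assertions of the statement for the group $C := \langle \sigma_1, \sigma_2, \sigma_3 \rangle$ equipped with the distinguished generators $\sigma_1, \sigma_2, \sigma_3$: the defining relations of the rotation group of a chiral polytope of rank four together with the orders of the generators; the intersection condition; the identity $C = \PSL(3,q)$; chirality; and self-duality. The first point is a direct matrix computation. In $\PSL(3,q)$ the elements $\sigma_1$ and $\sigma_3$ have order $q-1$ (each has eigenvalues $x, 1, x^{-1}$ over $\FF_q$), the element $\sigma_2$ acts on $\langle e_1 \rangle$ by $-x^{-1}$ and on $\langle e_2, e_3 \rangle$ through a matrix whose square is $xI$, so it has order $2(q-1)$ in $\mathrm{GL}(3,q)$ and $\tfrac{2(q-1)}{(3,q-1)}$ in $\PSL(3,q)$, and the products $\sigma_1\sigma_2$, $\sigma_2\sigma_3$, $\sigma_1\sigma_2\sigma_3$ turn out to be scalar matrices, so that $(\sigma_1\sigma_2)^2 = (\sigma_2\sigma_3)^2 = (\sigma_1\sigma_2\sigma_3)^2 = 1$ in $\PSL(3,q)$. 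These are exactly the relations of the rotation group of a chiral polytope of rank four, and the generator orders give the Schl\"afli type $\{q-1, \tfrac{2(q-1)}{(3,q-1)}, q-1\}$.

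Next I would pin down the rank-three residues $\langle \sigma_2, \sigma_3 \rangle$ and $\langle \sigma_1, \sigma_2 \rangle$. A computation of fixed points shows that $\sigma_3$ fixes $[0:1:0]$ and $[0:0:1]$ while $\sigma_2$ interchanges them, whence $\langle \sigma_2, \sigma_3 \rangle$ stabilises the pair of points $\{[0:1:0],[0:0:1]\}$; the analogous computation on the contragredient action shows $\langle \sigma_1, \sigma_2 \rangle$ stabilises a pair of lines meeting at the point $[1:0:0]$. To promote these inclusions to equalities, one computes the order of such a stabiliser in $\PSL(3,q)$ — for the two lines it is the kernel of the map to $\PGL(2,q)$ given by the pencil at $[1:0:0]$, extended by a transposition of the two lines, of order $\tfrac{2q^2(q-1)^2}{(3,q-1)}$ — and checks that $\langle\sigma_1,\sigma_2\rangle$ and $\langle\sigma_2,\sigma_3\rangle$ already have that order, for instance by producing inside them the full unipotent part and a maximal split torus. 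For the intersection condition, the standard reduction in rank four leaves three things to check: $\langle\sigma_1\rangle\cap\langle\sigma_2\rangle = 1$ and $\langle\sigma_2\rangle\cap\langle\sigma_3\rangle = 1$, both immediate from the orders and eigenvalues, and $\langle\sigma_1,\sigma_2\rangle \cap \langle\sigma_2,\sigma_3\rangle = \langle\sigma_2\rangle$. For the last, the left-hand side is the stabiliser of the incidence configuration made of the two points, the two lines, the point $[1:0:0]$ where the lines meet, and the line $x_1 = 0$ joining the two points; this configuration contains a triangle, and a short argument with the diagonal torus fixing that triangle and its action on the pencil at $[1:0:0]$ shows that this stabiliser has order exactly $\tfrac{2(q-1)}{(3,q-1)}$ and therefore equals $\langle\sigma_2\rangle$.

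That $C = \PSL(3,q)$ I would obtain from Mitchell's classification of the subgroups of $\PSL(3,q)$: $C$ contains both $\langle\sigma_1,\sigma_2\rangle$ and $\langle\sigma_2,\sigma_3\rangle$, and these two subgroups have no common fixed point and no common fixed line in $\PG(2,q)$, so $C$ lies in no maximal parabolic; moreover $\langle\sigma_1,\sigma_2\rangle$ is a solvable subgroup of order roughly $q^4$, which is too large to be contained in a subfield subgroup $\PSL(3,q_0)$ or $\PSU(3,q_0)$, in an orthogonal subgroup $\PGL(2,q)$ (a conic stabiliser), in the normaliser of a Singer cycle, or in any of the finitely many exceptional subgroups such as $\Alt(6)$ or $\PSL(2,7)$; finally $\langle\sigma_1,\sigma_2\rangle$ and $\langle\sigma_2,\sigma_3\rangle$ share no invariant triangle, which rules out the imprimitive maximal subgroups. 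Hence $C$ lies in no maximal subgroup of $\PSL(3,q)$, so $C = \PSL(3,q)$.

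For chirality one must show that $\PSL(3,q)$, with the triple $(\sigma_1,\sigma_2,\sigma_3)$, admits no automorphism $\alpha$ with $\alpha(\sigma_1) = \sigma_1^{-1}$, $\alpha(\sigma_2) = \sigma_1^2\sigma_2$ and $\alpha(\sigma_3) = \sigma_3$. Since $\langle\alpha(\sigma_1),\alpha(\sigma_2)\rangle = \langle\sigma_1,\sigma_2\rangle$, any such $\alpha$ would preserve the subgroup $\langle\sigma_1,\sigma_2\rangle$; but the inverse-transpose (graph) automorphism of $\PSL(3,q)$ carries stabilisers of pairs of lines to stabilisers of pairs of points, which are not conjugate in $\PSL(3,q)$, so $\alpha$ cannot involve the graph automorphism, and a short analysis of the eigenvalues of $\sigma_1$ excludes the field automorphisms as well; thus $\alpha$ must be inner. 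An inner $\alpha$ is conjugation by an element of $C_{\PGL(3,q)}(\sigma_3)$, which — as $\sigma_3$ is regular semisimple — is the maximal torus diagonalising $\sigma_3$, and one checks directly that no element of this torus conjugates $\sigma_1$ to $\sigma_1^{-1}$ and at the same time $\sigma_2$ to $\sigma_1^2\sigma_2$. (Alternatively, one may show that the residue $\langle\sigma_2,\sigma_3\rangle$ is itself a chiral polyhedron, which already forces the whole polytope to be chiral.) Lastly, self-duality is obtained by exhibiting explicitly a duality, that is, an automorphism $\delta$ of $\PSL(3,q)$ — built from the inverse-transpose automorphism followed by a suitable inner automorphism, and possibly a field automorphism — with $\delta(\sigma_1) = \sigma_3^{-1}$, $\delta(\sigma_2) = \sigma_2^{-1}$, $\delta(\sigma_3) = \sigma_1^{-1}$; the fact that $\delta$ interchanges $\langle\sigma_1,\sigma_2\rangle$ and $\langle\sigma_2,\sigma_3\rangle$ is consistent with its swapping pairs of lines with pairs of points, and confirming it on the three generators is a finite computation. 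I expect the main difficulties to be the identity $\langle\sigma_1,\sigma_2\rangle \cap \langle\sigma_2,\sigma_3\rangle = \langle\sigma_2\rangle$ — where the factor $(3,q-1)$ must be tracked carefully to be sure the count is being done inside $\PSL$ and not merely in $\PGL$ — and the chirality argument, which has to control $\mathrm{Aut}(\PSL(3,q))$ uniformly in $q$.
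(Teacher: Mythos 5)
Your proposal follows essentially the same route as the paper's: verify the string relations and generator orders directly, identify $\left\langle \sigma_1,\sigma_2\right\rangle$ and $\left\langle \sigma_2,\sigma_3\right\rangle$ as the full stabilisers of two lines and of two points, reduce the intersection property to $\left\langle \sigma_1,\sigma_2\right\rangle\cap\left\langle \sigma_2,\sigma_3\right\rangle=\left\langle \sigma_2\right\rangle$ via the stabiliser of the point--line configuration, rule out the orientation-reversing automorphism by analysing $\PGaL(3,q)$ together with the graph automorphism, and exhibit self-duality through the inverse-transpose map composed with a projectivity. The only genuine methodological difference is that you invoke Mitchell's classification of subgroups to conclude $\left\langle \sigma_1,\sigma_2,\sigma_3\right\rangle=\PSL(3,q)$, whereas the paper first proves $\left\langle \sigma_2,\sigma_3\right\rangle$ is the \emph{whole} stabiliser of $\set{p,p'}$ by explicitly generating its unipotent normal subgroup of order $q^2$; both work, and your order count $\frac{2q^2(q-1)^2}{(3,q-1)}$ for these stabilisers is correct.

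One statement must be corrected: the products $\sigma_1\sigma_2$, $\sigma_2\sigma_3$ and $\sigma_1\sigma_2\sigma_3$ are \emph{not} scalar matrices --- for instance $\sigma_1\sigma_2=\left(\begin{smallmatrix} -1 & 0 & 1\\ 0 & -1 & 1+x^{-1}\\ 0 & 0 & 1\end{smallmatrix}\right)$ --- rather their \emph{squares} equal the identity matrix, so they are genuine non-central involutions of $\PSL(3,q)$. If they really were scalar they would be trivial in $\PSL(3,q)$, forcing $\sigma_2=\sigma_1^{-1}$ up to scalars, making $\left\langle \sigma_1,\sigma_2\right\rangle$ cyclic rather than the stabiliser of two lines, and depriving you of the centres and axes of $\sigma_1\sigma_2$ and $\sigma_1\sigma_2\sigma_3$ that your (and the paper's) chirality argument relies on. The fix is a one-line recomputation, so this is a slip rather than a structural gap, but as written that sentence is inconsistent with the rest of your argument.
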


We also construct rank five chiral polytopes for some groups $\PSL(3,q)$.

\begin{theorem}\label{rank5}
   Let $q = p$ be a prime number congruent to $1$ modulo $6$. 
   Let $\FF_q$ be the finite field of order $q$.
   Let $\omega$ be a primitive third root of unity of $\FF_q$. 
   Then the following elements of $\PSL(3,q)$ generate chiral polytopes of rank five.
    \[
    \sigma_1 = \begin{pmatrix}
    1 & 0 & 0 \\
    0 & \omega^i & 0 \\
    0 & 0 & \omega^{2i}
    \end{pmatrix},\ 
    \sigma_2 = \begin{pmatrix}
    -1 & 0 & -k \\
    0 & -\omega^{2i} & -k\omega^{2i} \\
    0 & 0 & \omega^i
    \end{pmatrix},\ 
    \]
    
    \[
    \sigma_3 = \begin{pmatrix}
    1 & k & 0 \\
    0 & k & 1 \\
    0 & -1 & 0
    \end{pmatrix}
    \text{ and }
    \sigma_4 = \begin{pmatrix}
    0 & 1 & 0 \\
    0 & 0 & 1 \\
    1 & 0 & 0
    \end{pmatrix}
    \]
    with $k = \pm 1$ and $i=1$ or $2$. The Schl\"afli types are $\{3,6,6,3\}$ for $k=1$ and $\{3,6,3,3\}$ for $k = -1$.
\end{theorem}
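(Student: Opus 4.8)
The proof will follow the standard route for certifying that a group with distinguished generators is the rotation group of a chiral polytope (see Schulte--Weiss): one checks (i) the relations $(\sigma_i\sigma_{i+1}\cdots\sigma_j)^2=1$ for all $1\le i<j\le 4$; (ii) the intersection property for the subgroups generated by subsets of $\{\sigma_1,\sigma_2,\sigma_3,\sigma_4\}$; and (iii) that $\Gamma:=\langle\sigma_1,\sigma_2,\sigma_3,\sigma_4\rangle$ admits no automorphism mapping $\sigma_1\mapsto\sigma_1^{-1}$, $\sigma_2\mapsto\sigma_1^{2}\sigma_2$ and fixing $\sigma_3$ and $\sigma_4$, whose existence would make the polytope directly regular rather than chiral. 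Step (i) is a direct matrix computation: each of the products $\sigma_1\sigma_2$, $\sigma_2\sigma_3$, $\sigma_3\sigma_4$, $\sigma_1\sigma_2\sigma_3$, $\sigma_2\sigma_3\sigma_4$ and $\sigma_1\sigma_2\sigma_3\sigma_4$ squares to a scalar matrix, hence to the identity of $\PSL(3,q)$; the same computation records that $|\sigma_2|=6$ always and that $|\sigma_3|$ equals $6$ when $k=1$ and $3$ when $k=-1$, while $|\sigma_1|=|\sigma_4|=3$ is immediate, yielding the two Schl\"afli types $\{3,6,6,3\}$ and $\{3,6,3,3\}$. Here the hypothesis $q\equiv 1\pmod 6$ is precisely what supplies the primitive cube (and sixth) roots of unity in $\FF_q$, and I would note in passing that replacing $\omega$ by $\omega^{2}$ interchanges the cases $i=1$ and $i=2$, so only the two values of $k$ need separate bookkeeping.

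Next I would identify $\Gamma$. The matrices $\sigma_1,\sigma_2,\sigma_3$ all fix the point $\langle e_1\rangle$ of $\PG(2,q)$, so $\langle\sigma_1,\sigma_2,\sigma_3\rangle$ lies inside the corresponding maximal parabolic; since $\sigma_4$ permutes the three coordinate points cyclically, $\Gamma$ fixes no point, and comparing the invariant lines of $\sigma_1$ with those of $\sigma_4$ shows that $\Gamma$ fixes no line either. The commutator $[\sigma_1,\sigma_2]$ is a transvection (its difference from the identity has rank one and square zero), and $\sigma_2$ has order $6$. Feeding these two facts into the classification of subgroups of $\PSL(3,p)$ (Mitchell; or Bray--Holt--Roney-Dougal, using that primality of $q=p$ removes the subfield and unitary subgroups) excludes every proper overgroup: the transvection has element order $p\ge 7$, which divides neither the order of an imprimitive monomial subgroup nor that of an extraspecial-type normaliser and moreover lies in the wrong unipotent class to sit inside the stabiliser of a conic; while the element $\sigma_2$ of order $6$ rules out the Singer normaliser, of odd order $3(q^{2}+q+1)$, together with the exceptional primitive groups $\Alt(5)$, $\Alt(6)$, $\PSL(2,7)$ and their relatives, none of which has an element of order $6$. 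Hence $\Gamma=\PSL(3,q)$. The subgroup $\langle\sigma_1,\sigma_2,\sigma_3\rangle$ is proper since it fixes a point, and $\langle\sigma_2,\sigma_3,\sigma_4\rangle$ must be proper as well, which will drop out of the structural description of the subgroups obtained in the next step.

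For the intersection property I would use the rank-recursion criterion of Schulte--Weiss: it suffices to show that the facet group $\langle\sigma_1,\sigma_2,\sigma_3\rangle$ and the vertex-figure group $\langle\sigma_2,\sigma_3,\sigma_4\rangle$ are themselves rotation groups of rank-four polytopes and that $\langle\sigma_1,\sigma_2,\sigma_3\rangle\cap\langle\sigma_2,\sigma_3,\sigma_4\rangle=\langle\sigma_2,\sigma_3\rangle$, and then to run the same reduction on these two rank-four groups all the way down to the rank-three sections $\langle\sigma_i,\sigma_{i+1}\rangle$, where only $\langle\sigma_i\rangle\cap\langle\sigma_{i+1}\rangle=1$ remains. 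I would make each of these equalities explicit by describing the subgroups $\langle\sigma_i:i\in I\rangle$ as concrete subgroups of $\PSL(3,q)$ -- stabilisers of prescribed configurations of points and lines of $\PG(2,q)$, exactly as in the last lines of Theorem~\ref{rank4} -- and then comparing orders. \textbf{This is the step I expect to be the main obstacle}: one must pin down the orders of the proper subgroups $\langle\sigma_1,\sigma_2,\sigma_3\rangle$ and $\langle\sigma_2,\sigma_3,\sigma_4\rangle$ and prove, uniformly in $q$, that their intersection collapses to exactly $\langle\sigma_2,\sigma_3\rangle$ rather than to some larger subgroup of the parabolic. I would approach this by realising each of these groups as an explicit extension inside a maximal parabolic of $\PSL(3,q)$, reading the intersection off the stabilised flags, and checking the first few primes $q\in\{7,13,19,\dots\}$ with a computer algebra system to guard against an accidental coincidence in small characteristic.

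Finally, to see that the polytope is properly chiral, suppose $\Gamma=\PSL(3,q)$ carried an automorphism $\rho$ with $\rho(\sigma_1)=\sigma_1^{-1}$, $\rho(\sigma_2)=\sigma_1^{2}\sigma_2$, $\rho(\sigma_3)=\sigma_3$ and $\rho(\sigma_4)=\sigma_4$. Because $q=p$ is prime, $\mathrm{Aut}(\PSL(3,q))=\PGL(3,q)\rtimes\langle\gamma\rangle$ with $\gamma$ the inverse-transpose (duality) automorphism and no field automorphisms, so $\rho$ is conjugation by an element of $\PGL(3,q)$, possibly composed with $\gamma$. In either case I would reach a contradiction by exhibiting an invariant of the ordered generating tuple that $\rho$ must respect but does not -- for instance the configuration of fixed points and fixed lines of a short word in the $\sigma_i$, or the order of a word such as $\sigma_1\sigma_3\sigma_1^{-1}\sigma_3^{-1}$ set against that of its image -- which shows the two enantiomorphic generating sets are inequivalent and hence that the polytope is chiral.
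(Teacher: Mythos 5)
Your plan follows the paper's proof essentially step for step: direct verification of the string relations and Schl\"afli entries, identification of the sectional subgroups as stabilisers of points, lines and a conic to establish the intersection property (the paper likewise reduces everything to showing $\left\langle\sigma_2,\sigma_3\right\rangle$ is the full stabiliser of $(1,0,0)$ and $(1,\omega,\omega^2)$ and then omits that ``long but easy'' computation, exactly the step you flag as the main obstacle), and exclusion of an orientation-reversing automorphism using that $\mathrm{Aut}(\PSL(3,p))$ has no field automorphisms and cannot be a duality, where the paper makes your ``invariant of the generating tuple'' concrete by showing any candidate $\phi\in\PGL(3,q)$ must preserve the axes $Z=0$, $Y=Z$, $X=Y$ and the centers of $\sigma_1\sigma_2$, $\sigma_1\sigma_2\sigma_3$, $\sigma_1\sigma_2\sigma_3\sigma_4$, hence be the identity. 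Your transvection-plus-subgroup-classification argument that $\left\langle\sigma_1,\ldots,\sigma_4\right\rangle=\PSL(3,q)$ is in fact more detailed than the paper, which merely asserts generation.
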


Finally, we show that there are no chiral polytopes of rank greater than five having a group $\PSL(3,q)$ as automorphism group. Our proof actually works for the groups $\PSU(3,q)$.

\begin{theorem}\label{rank6}
Let $\FF_q$ ($q \geq 2$) be the finite field of order $q$.
The groups $\PSL(3,q)$ and $\PSU(3,q)$ are not automorphism groups of chiral polytopes of rank greater than five.
\end{theorem}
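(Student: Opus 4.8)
The plan is to combine the recursive structure of abstract chiral polytopes with the classification of the maximal subgroups of $\PSL(3,q)$ and $\PSU(3,q)$, via a minimal-counterexample argument (equivalently, induction on $q$). Suppose $\mathcal{P}$ is a chiral polytope of rank $n \geq 6$ with $\mathrm{Aut}(\mathcal{P}) = G \in \{\PSL(3,q), \PSU(3,q)\}$ and distinguished generators $\sigma_1, \dots, \sigma_{n-1}$. The facet and the vertex-figure of $\mathcal{P}$ are polytopes of rank $n-1 \geq 5$ that are directly regular or chiral, and their automorphism groups $A := \langle \sigma_1, \dots, \sigma_{n-2}\rangle$ and $B := \langle \sigma_2, \dots, \sigma_{n-1}\rangle$ are \emph{proper} subgroups of $G$ (since $A \cap \langle\sigma_{n-1}\rangle = B \cap \langle\sigma_1\rangle = 1$, by the intersection property), with $A \cap B = \langle \sigma_2, \dots, \sigma_{n-2}\rangle =: C$ the automorphism group of a directly regular or chiral polytope of rank $n-2 \geq 4$, and $\langle A, B\rangle = G$. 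Passing repeatedly to facets, $\langle\sigma_1,\dots,\sigma_5\rangle$ is the automorphism group of a directly regular or chiral polytope of rank $6$; hence it suffices to prove that no subgroup of $\PSL(3,q)$ or $\PSU(3,q)$ is the automorphism group of a directly regular or chiral polytope of rank $6$, and we fix $q$ minimal with a counterexample.

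The technical heart is a lemma describing the subgroups $H$ of $\PSL(3,q)$ and $\PSU(3,q)$ that can be automorphism groups of directly regular or chiral polytopes of rank at least $4$, proved by running through Mitchell's (resp.\ Hartley's) list of maximal subgroups. Since $(\sigma_1\sigma_2)^2 = 1$ and $\sigma_1\sigma_2 \neq 1$, the group always contains an involution, so the normaliser of a Singer cycle (of order coprime to $2$) and all its subgroups are excluded. The maximal parabolic subgroups and the stabilisers of point--line flags have a large normal $p$-subgroup, the stabiliser of a non-incident point--line pair is essentially $\mathrm{GL}(2,q)$, and the imprimitive subgroups and the remaining torus normalisers are solvable; a structural analysis --- the chain of subgroup intersections forced by a rank-$\geq 4$ polytope cannot be realised inside these groups --- eliminates them and their subgroups. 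The almost simple maximal subgroups $\PSL(2,q)$, $\PGL(2,q)$ (and $\Alt(6) \cong \PSL(2,9)$, $\PSL(2,7)$) have no directly regular or chiral polytope of rank $\geq 5$, by \cite{LM2017} and its companion for $\PGL(2,q)$; $\Alt(7)$ is never chiral~\cite{HHL} and has no regular polytope of rank $\geq 5$; the Hessian groups $3^2{:}Q_8$, $3^2{:}\mathrm{SL}(2,3)$ and their subgroups are too small to be automorphism groups of a rank-$5$ polytope; and $\PSL(3,q_0)$, $\PSU(3,q_0)$ are never automorphism groups of regular polytopes (Nuzhin; Vandenschrick~\cite{Adrien}). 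The conclusion is: the only subgroups of $G$ that are automorphism groups of a rank-$\geq 5$ directly regular or chiral polytope are the proper subfield(-type) subgroups $\PSL(3,q_0)$ or $\PSU(3,q_0)$, and these occur only in the chiral case and --- by the minimality of $q$ --- only in rank exactly $5$; in particular $G$ itself is the only subgroup of $G$ that could be the automorphism group of a rank-$6$ chiral polytope, and no subgroup of $G$ has a regular polytope of rank $\geq 5$.

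It remains to rule out that $G$ itself is the automorphism group of a rank-$6$ chiral polytope. By the lemma, the facet group $A$ and the vertex-figure group $B$ are then chiral of rank $5$ and are proper subfield subgroups, say $A \cong \PSL(3,q_0)$ and $B$ a $G$-conjugate of $\PSL(3,q_1)$ (with the unitary analogues in the $\PSU$ case), and $C = A \cap B$ is the automorphism group of a rank-$4$ directly regular or chiral polytope, hence again a subfield subgroup. From $|AB| = |A|\,|B|/|C| \leq |G|$ one finds that $\mathrm{lcm}(q_0, q_1)$ divides, and is only mildly smaller than, $q$; in the generic case $\mathrm{lcm}(q_0, q_1) < q$, so $A$ and $B$ both lie in the proper subfield subgroup $\PSL(3, \mathrm{lcm}(q_0, q_1))$, contradicting $\langle A, B\rangle = G$. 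The delicate case is $\mathrm{lcm}(q_0, q_1) = q$ with $q_0, q_1$ both proper, where one must exploit the \emph{full} intersection property: $C$ is simultaneously the vertex-figure group of the rank-$5$ polytope of $A$ and the facet group of that of $B$, and $\langle\sigma_3\rangle, \langle\sigma_2,\sigma_3\rangle, \langle\sigma_3,\sigma_4\rangle, \dots$ must be the prescribed intersections of the corresponding subgroups; one shows that subfield subgroups built over incompatible subfields of $\FF_q$ cannot be positioned in $G$ so as to realise this configuration. The same argument, with the maximal subgroups and subfield structure of $\PSU(3,q)$, finishes the unitary case.

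The main obstacle is the structural lemma: verifying that the reducible and solvable maximal subgroups (and their numerous subgroups) cannot carry a rank-$\geq 4$ polytope, dealing with $\PGL(2,q)$, and closing the subfield-on-subfield configuration in rank $6$ through the intersection property. Once the lemma is in place, the reduction to rank $6$, the minimal-counterexample step and the index estimate are routine.
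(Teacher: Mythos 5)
Your recursion is genuinely different from the paper's: you reduce rank by one via facets and vertex-figures, which forces you to classify the subgroups of $\PSL(3,q)$ and $\PSU(3,q)$ admitting rank-$5$ \emph{rotational} ($C^+$-group) representations, whereas the paper reduces rank by two via the involution subgroup $\left\langle \tau_{1,2},\ldots,\tau_{1,r-1}\right\rangle$ of Lemma~\ref{lem:2.3}, landing on \emph{string C-groups} of rank $r-2$. The latter are far more tractable here, because involutions of $\PSL(3,q)$ are homologies or elations with a centre and an axis, and commuting involutions are rigidly constrained geometrically; the paper exploits exactly this to classify all non-degenerate rank-$4$ string C-groups in subgroups of $\PSL(3,q)$ and then finishes rank $6$ by explicit matrix computation (plus Lemma~\ref{notwo} to exclude Schl\"afli entries equal to $2$, since $\PSL(3,q)$ and $\PSU(3,q)$ are simple).

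The genuine gap is your ``technical heart'' lemma, which is both unproved and, as stated for rank $\geq 4$, false. Section~\ref{regular} of the paper exhibits rank-$4$ directly regular polytopes for solvable, reducible subgroups of $\PSL(3,q)$ --- $D_{2p}^2$, $He_p:C_2^2$, $E_{q'}^4:D_{2k}$ --- and for $\POmega(3,q')$ and $\PSO(3,q')$; and Theorems~\ref{rank4} and~\ref{rank5} show $\PSL(3,q)$ itself carries rank-$4$ and sometimes rank-$5$ chiral polytopes. So ``the chain of subgroup intersections forced by a rank-$\geq 4$ polytope cannot be realised inside these groups'' is not a correct elimination principle. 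Even the weakened rank-$\geq 5$ version you actually use is not established: you must rule out rank-$5$ $C^+$-representations for $\PGL(2,q')$ (the ``companion'' to~\cite{LM2017} you invoke is not in the literature cited here, and~\cite{ls09} only treats the regular case) and for all the solvable subgroups above, and in the directly regular case the relevant subgroup $A$ is only the \emph{rotation} subgroup of the facet, so your classification must cover rotation subgroups, not automorphism groups --- a distinction your setup conflates. Finally, the endgame where $A$ and $B$ are subfield subgroups with $\mathrm{lcm}(q_0,q_1)=q$ is precisely the hard configuration and is left as ``one shows that\ldots''; nothing in the proposal actually closes it. As written, the argument does not constitute a proof.
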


The paper is organised as follows.
In Section~\ref{prelim}, we give all the necessary definitions and notation to understand this paper.
In Section~\ref{srank4}, we prove Theorem~\ref{rank4}.
In Section~\ref{srank5}, we prove Theorem~\ref{rank5}.
In Section~\ref{regular}, we classify the rank four directly regular polytopes that have a subgroup of $\PSL(3,q)$ as automorphism group. These are needed in order to prove Theorem~\ref{rank6}.
In Section~\ref{srank6}, we prove Theorem~\ref{rank6}.
Finally, we conclude the paper in Section~\ref{conclusion} where we give some open problems.
\section{Preliminaries}\label{prelim}

Groups of automorphisms of regular and chiral polytopes have been characterised in~\cite[Section 2E]{McMullenSchulte} and~\cite{SchulteWeiss} respectively.

A group $G$ is the group of automorphism of a regular polytope of rank $r$ if and only if there exists a tuple $S := (\rho_0,\cdots,\rho_{r-1})$ of involutions of $G$ which generate $G$ and such that
\begin{itemize}
    \item for every $i,j\in\{0, \ldots, r-1\}$, $\rho_i$ and $\rho_j$ commute if $|i-j|>1$ (string property) and
    \item for every $I$, $J\subseteq \{0,\ldots ,r-1\}$, $\left\langle \rho_i | i \in I \right\rangle \cap \left\langle \rho_i | i \in J \right\rangle = \left\langle \rho_i | i \in I \cap J \right\rangle $ (intersection property $IP$)
\end{itemize}
The pair $(G,S)$ is called a {\em string $C$-group representation} of $G$. 
The {\em Schl\"afli type} of the polytope or of the string C-group representation is the tuple formed by the orders of $\rho_i\rho_{i+1}$ for $i = 0, \cdots, r-2$.
If $(G,S)$ has a Schl\"afli type with $2$'s in it, it is called {\em degenerate}. Otherwise it is called {\em non-degererate}.
Moreover, $(G,S)$ is {\em directly regular} if the subgroup $\left\langle \rho_0\rho_1, \rho_1\rho_2, \cdots, \rho_{r-2}\rho_{r-1} \right\rangle$ has index $2$ in $G$. This subgroup is then called the {\em rotation subgroup}. The {\em dual string $C$-group representation} is obtained by taking the same group $G$ with the tuple $(\rho_{r-1},\rho_{r-2},\cdots,\rho_0)$.

A group $G$ is the group of automorphisms of a chiral polytope of rank $r$ if and only if there exists a tuple $(\sigma_1,\cdots,\sigma_{r-1})$ such that the $\sigma_i$'s generate $G$ and such that
\begin{itemize}
    \item for every $i,j\in\{1,\ldots, r-1\}$, if $i<j$, the products $\tau_{i,j} = \sigma_i\sigma_{i+1}\cdots\sigma_j$ are involutions (string property),
    \item $\left\langle \tau_{i,j} | i,j \notin I \right\rangle \cap \left\langle \tau_{i,j} | i,j \notin J \right\rangle = \left\langle \tau_{i,j} | i,j \notin I \cup J \right\rangle$ (intersection property $IP^+$) and
    \item there does not exist an automorphism sending $\sigma_1$ to its inverse and fixing $\tau_{1,i}$ for $i = 2, \ldots, r-1$.
\end{itemize}
The group $G$ together with this tuple is then said to form a {\em $C^+$-group} as in~\cite{FLW}.
The {\em Schl\"afli type} of the polytope is the tuple formed by the orders of the $\sigma_i$'s for $i = 1, \cdots, r-1$. 
The dual $C^+$-group is obtained by taking the same group $G$  with the tuple $(\sigma_{r-1}^{-1},\sigma_{r-2}^{-1},\cdots,\sigma_1^{-1})$.

If there exist a tuple verifying all these properties except the last one, then the group generated by $G$ and the automorphism of the last property is the group of automorphism of a directly regular polytope. Moreover, $G$ is the rotation subgroup of that polytope.

In both cases the verification of the intersection property can be shortened if one assumes the other properties. The verification is recursive in nature. We have the following results.
\begin{lemma}\cite[Proposition 2E16]{McMullenSchulte}
   Let $G$ be a group and let $S:=(\rho_1,\cdots,\rho_r)$ be a group of involutions that generate $G$ and satisfy the string property. Then the pair $(G,S)$ satisfies the intersection property $IP$ if and only if
   \[ (\rho_1,\cdots,\rho_{r-1}) \text{ and } (\rho_2,\cdots,\rho_{r})\]
   satisfy the intersection property and
   \[ \left\langle \rho_1,\cdots,\rho_{r-1}\right\rangle
   \cap  \left\langle \rho_2,\cdots,\rho_{r}\right\rangle 
   =  \left\langle \rho_2,\cdots,\rho_{r-1}\right\rangle. \]
\end{lemma}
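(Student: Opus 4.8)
The plan is to establish the two directions separately, essentially all the work being in the ``if'' direction. Write $R=\{1,\dots,r\}$ and, for $I\subseteq R$, put $G_I=\langle\rho_i\mid i\in I\rangle$; recall that the string property says $\rho_i\rho_j=\rho_j\rho_i$ whenever $|i-j|>1$. The ``only if'' direction is immediate: specialising the quantifier in $IP$ to subsets of $\{1,\dots,r-1\}$, and then to subsets of $\{2,\dots,r\}$, gives $IP$ for the two facet tuples $(\rho_1,\dots,\rho_{r-1})$ and $(\rho_2,\dots,\rho_r)$, while the single choice $I=\{1,\dots,r-1\}$, $J=\{2,\dots,r\}$ (so $I\cap J=\{2,\dots,r-1\}$) gives the displayed equality.

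For the converse I would argue by induction on $r$, the cases $r\le 2$ being vacuous, and assume the three displayed hypotheses. First note that $IP$ for the facet tuple $(\rho_1,\dots,\rho_{r-1})$ entails $IP$ for every sub-tuple indexed by a subset of $\{1,\dots,r-1\}$, and likewise for $(\rho_2,\dots,\rho_r)$; in particular $(\rho_2,\dots,\rho_{r-1})$ satisfies $IP$. Fix $I,J\subseteq R$; since $G_{I\cap J}\le G_I\cap G_J$ is automatic, it remains to show $G_I\cap G_J\le G_{I\cap J}$, and we may assume $I,J\subsetneq R$. Now split into cases according to which nodes are omitted. (i) If the node $1$, or the node $r$, lies in neither $I$ nor $J$, then $G_I$ and $G_J$ both lie in one of the two facet subgroups and that facet's $IP$ concludes. (ii) If some interior node $k$ lies in neither $I$ nor $J$, write $I=A\sqcup B$ and $J=C\sqcup D$ with $A,C\subseteq\{1,\dots,k-1\}$, $B,D\subseteq\{k+1,\dots,r\}$; the string property makes $G_{\{1,\dots,k-1\}}$ and $G_{\{k+1,\dots,r\}}$ commute elementwise, and combining the two facet $IP$'s with the displayed equality shows these subgroups meet trivially, so $G_I=G_A\times G_B$ and $G_J=G_C\times G_D$ are internal direct products; an element of $G_I\cap G_J$ then splits componentwise and, by the facet $IP$'s inside $\{1,\dots,k-1\}$ and $\{k+1,\dots,r\}$, lies in $G_{A\cap C}\,G_{B\cap D}=G_{I\cap J}$. (iii) Otherwise $I\cup J=R$ with $I,J$ proper; in the key sub-case $1\notin I$ and $r\notin J$ one has $G_I\le G_{\{2,\dots,r\}}$ and $G_J\le G_{\{1,\dots,r-1\}}$, so the displayed equality forces $G_I\cap G_J\le G_{\{2,\dots,r-1\}}$, and the facet $IP$'s together with $IP$ for $(\rho_2,\dots,\rho_{r-1})$ then place it in $G_{I\cap\{2,\dots,r-1\}}\cap G_{J\cap\{2,\dots,r-1\}}=G_{I\cap J}$ (using $I\cap J\subseteq\{2,\dots,r-1\}$); the remaining sub-configurations reduce, after intersecting with a suitable facet, to cases already treated, which is where the induction hypothesis is invoked.

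The bulk of this is routine algebra of subgroup intersections. The one genuinely delicate point — and the reason the reduction is valid only for \emph{string} diagrams — is case (ii): one must use the path shape of the diagram to decompose a parabolic subgroup on a disconnected index set as an internal direct product of its connected blocks, and the triviality of the intersections that make it a \emph{direct} product has to be extracted from the facet hypotheses and the displayed equality rather than from $IP$ for $G_R$ itself (which is exactly what is being proved), so that the argument stays non-circular. Everything else is bookkeeping over the finitely many configurations of omitted nodes.
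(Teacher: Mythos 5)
The paper does not prove this lemma; it is quoted from McMullen--Schulte (Proposition 2E16) without proof, so your argument can only be judged on its own terms. Your ``only if'' direction, cases (i) and (ii), and the explicit sub-case of (iii) (where $1\notin I$ and $r\notin J$, or symmetrically) are correct. In particular, the triviality of $\left\langle \rho_1,\dots,\rho_{k-1}\right\rangle \cap \left\langle \rho_{k+1},\dots,\rho_r\right\rangle$ does follow from the two facet $IP$'s together with the displayed equality, exactly as you indicate, and the componentwise splitting in (ii) then goes through; your remark about non-circularity there is well taken.

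The gap is the final sentence of case (iii). After your explicit sub-case, the configurations that remain have $I\cup J=R$ with $I,J$ proper and ($1\in I$ or $r\in J$) and ($1\in J$ or $r\in I$). Most of these do reduce as you say, because at least one of $G_I,G_J$ still lies in a facet subgroup. But the configuration $\{1,r\}\subseteq I\cap J$ (for instance $r=4$, $I=\{1,3,4\}$, $J=\{1,2,4\}$) is not of this form: neither $G_I$ nor $G_J$ is contained in either facet subgroup, so ``intersecting with a suitable facet'' has nothing to act on, the displayed equality cannot be applied to $G_I\cap G_J$ directly, and the induction on $r$ you invoke buys nothing beyond the facet hypotheses already assumed. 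This case is true but needs its own argument, which is essentially a two-cut version of your case (ii): choose interior nodes $k\notin I$ and $m\notin J$ with, say, $k<m$, split $G_I$ across $k$ and $G_J$ across $m$, equate the two factorisations $g=ab=cd$ of an element of $G_I\cap G_J$, and show that the mixing element $c^{-1}a=db^{-1}$ lies in $\left\langle \rho_{k+1},\dots,\rho_{m-1}\right\rangle\cap G_{I\cap J}$ by one application of the displayed equality followed by the facet $IP$'s; only then do all factors land in $G_{I\cap J}$. As written, the hardest case of the lemma is asserted rather than proved.
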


\begin{lemma}\cite[Lemma 10]{SchulteWeiss}
   Let $G$ be a group and let $S:= (\sigma_1,\cdots,\sigma_{r-1})$ be a set of elements of $G$ that generate $G$ and satisfy the string property.
   Then $(G,S)$ verifies the intersection property $IP^+$ if and only if
   \[ (\sigma_1,\cdots,\sigma_{r-2})\]
   satisfies the intersection property $IP^+$ and
   \[ \left\langle \sigma_1,\cdots,\sigma_{r-2}\right\rangle
   \cap  \left\langle \sigma_i,\cdots,\sigma_{r-1}\right\rangle 
   =  \left\langle \sigma_i,\cdots,\sigma_{r-2}\right\rangle \text{ for } i = 1,\cdots, r-1.\]
\end{lemma}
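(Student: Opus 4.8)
The plan is to prove the two implications separately: the forward one is a direct specialisation, while the backward one is an induction on the rank $r$ whose engine is the Dedekind modular law. Throughout, for a subset $K$ of the index set write $G_K=\langle \tau_{i,j} : i,j\notin K\rangle$ for the corresponding distinguished subgroup, so that $G_\emptyset=G$, the \emph{facet subgroup} is $H=\langle\sigma_1,\dots,\sigma_{r-2}\rangle=G_{\{r-1\}}$, and the \emph{co-vertex subgroups} are the $\langle\sigma_i,\dots,\sigma_{r-1}\rangle$. The intersection property $IP^+$ is the family of identities $G_I\cap G_J=G_{I\cup J}$. Since enlarging the excluded set only removes generators, the inclusion $G_{I\cup J}\subseteq G_I\cap G_J$ always holds, so in every case only the reverse inclusion must be established.

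For the forward direction, suppose $(G,S)$ satisfies $IP^+$. Restricting the index sets $I,J$ so that the top index $r-1$ is always excluded forces $G_I,G_J,G_{I\cup J}$ to lie inside $H$ and to coincide with the distinguished subgroups of the shorter tuple $(\sigma_1,\dots,\sigma_{r-2})$; hence $IP^+$ for $(G,S)$ restricts to $IP^+$ for that tuple, which is condition (a). The family $\langle\sigma_1,\dots,\sigma_{r-2}\rangle\cap\langle\sigma_i,\dots,\sigma_{r-1}\rangle=\langle\sigma_i,\dots,\sigma_{r-2}\rangle$ is nothing but $IP^+$ applied to the index sets singling out $H$ and the $i$-th co-vertex subgroup, so condition (b) follows at once.

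For the backward direction I would argue by induction on $r$, the cases $r\le 3$ being immediate. Assuming (a) and (b), the heart of the matter is the single identity $G_I\cap H=G_{I\cup\{r-1\}}$ for every $I$: writing $G_I$ as the product of $G_{I\cup\{r-1\}}$ with a suitable co-vertex subgroup $\langle\sigma_i,\dots,\sigma_{r-1}\rangle$, the modular law $A\cap BC=(A\cap B)C$ applies because $G_{I\cup\{r-1\}}\le H$, and intersecting the co-vertex factor with $H$ through condition (b) collapses the whole intersection onto $G_{I\cup\{r-1\}}$. With this identity in hand, an arbitrary intersection is analysed by meeting both sides with $H$: from the identity applied to $I$ and to $J$, together with the inductive $IP^+$ inside $H$ supplied by (a), one obtains $G_I\cap G_J\cap H=G_{I\cup\{r-1\}}\cap G_{J\cup\{r-1\}}=G_{I\cup J}\cap H$.

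The step I expect to be the main obstacle is precisely completing the argument when neither index set excludes the top index, so that both $G_I$ and $G_J$ straddle $H$ and cannot be confined to it by a single intersection. Here the reduction to $H$ only controls the part of $G_I\cap G_J$ lying in $H$, and one is led to also invoke the inductive hypothesis within the co-vertex subgroup $\langle\sigma_2,\dots,\sigma_{r-1}\rangle$, applying it at the \emph{other} end of the tuple; since the chiral relations are not symmetric under reversing the order of the generators, verifying that (a) and the full family (b) together propagate $IP^+$ to that subgroup, and that the two reductions are compatible, is where the real care is needed. Unlike the string $C$-group analogue \cite[Proposition~2E16]{McMullenSchulte}, the generators $\sigma_i$ do not commute, so rewriting the products $\tau_{i,j}$ to exhibit the required factorisations must be carried out through the defining relations of a $C^+$-group rather than by simply permuting generators.
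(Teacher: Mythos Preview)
The paper does not prove this lemma at all: it is stated with a citation to \cite[Lemma~10]{SchulteWeiss} and used as a black box, so there is no in-paper proof to compare against.

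As a standalone attempt, your forward direction is fine. For the backward direction, the inductive/modular-law strategy is indeed the standard one, but your sketch has two soft spots that you partly acknowledge. First, the key factorisation ``$G_I = G_{I\cup\{r-1\}}\cdot\langle\sigma_i,\dots,\sigma_{r-1}\rangle$'' is asserted rather than proved; in the chiral setting the generators of $G_I$ are the various $\tau_{k,\ell}$ with $k,\ell\notin I$, and showing that the subgroup they generate decomposes this way requires an argument using the $C^+$-relations (in particular the identities expressing each $\tau_{k,\ell}$ in terms of shorter $\tau$'s), not just a hand-wave. Second, as you say yourself, the case $r-1\notin I\cup J$ is the crux and your plan does not resolve it: you need $IP^+$ inside the co-facet subgroup $\langle\sigma_2,\dots,\sigma_{r-1}\rangle$, but hypotheses (a) and (b) are asymmetric and do not give this for free. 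The actual proof in \cite{SchulteWeiss} handles this by a careful simultaneous treatment of both ends, which is exactly the ``real care'' you flag but do not carry out. So your proposal is a correct high-level plan, but as written it is an outline with the hardest step left open rather than a proof.
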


We can construct the automorphism group of a chiral polytope from the automorphism groups of directly regular polytopes by relying on the following lemma which is just a rewriting, in terms of $C^+$-groups of the fact that chiral polytopes of rank $n$ have $(n-2)$-faces that are directly regular polytopes, as it was shown in~\cite{SchulteWeiss}.
\begin{lemma}\label{lem:2.3}
   Let $(G,(\sigma_1, \ldots, \sigma_{r-1}))$ be a $C^+$-group.
   Then the subgroup (where $\tau_{i,j}$ is defined as above)
   \[ G^+ := \left\langle \tau_{1,2}, \tau_{1,3}, \cdots, \tau_{1,r-1} \right\rangle \]
   together with the tuple consisting of its generators is a string $C$-group representation of $G^+$ that is directly regular, with rotation subgroup
   \[ \left\langle \sigma_{3}, \sigma_4 \cdots, \sigma_{r-1} \right\rangle. \]
   From a $C^+$-group of type $[k_1,\cdots,k_{r-1}]$, we get a string $C$-group of type $[k_3,\cdots,k_{r-1}]$. The dual construction yields a string $C$-group of type $[k_1,\cdots,k_{r-3}]$.
\end{lemma}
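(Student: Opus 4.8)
The plan is to set $\rho_i := \tau_{1,i+1}$ for $1 \le i \le r-2$, so that by hypothesis each $\rho_i$ is an involution and, by definition, $G^+ = \left\langle \rho_1,\ldots,\rho_{r-2}\right\rangle$. Everything rests on the single identity
\[
\rho_i\rho_j \;=\; \tau_{1,i+1}\,\tau_{1,j+1} \;=\; \tau_{i+2,\,j+1} \qquad (1 \le i < j \le r-2),
\]
which follows from $\tau_{1,j+1} = \tau_{1,i+1}\,\tau_{i+2,j+1}$ together with $\tau_{1,i+1}^2 = 1$. First I would read off the string property and the Schl\"afli type from it. For $j \ge i+2$ the right-hand side is $\tau_{i+2,j+1}$ with $i+2 < j+1$, hence an involution by the string property of the $\sigma$-tuple, so $\rho_i$ and $\rho_j$ commute; while for $j = i+1$ it reduces to $\rho_i\rho_{i+1} = \tau_{i+2,i+2} = \sigma_{i+2}$, of order $k_{i+2}$. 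This simultaneously yields the string property for $(\rho_1,\ldots,\rho_{r-2})$, the type $[k_3,\ldots,k_{r-1}]$, and the identification $\left\langle \rho_1\rho_2,\ldots,\rho_{r-3}\rho_{r-2}\right\rangle = \left\langle \sigma_3,\ldots,\sigma_{r-1}\right\rangle$ of the candidate rotation subgroup.

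For direct regularity I would reduce to a single non-containment. The subgroup $R := \left\langle \rho_i\rho_{i+1}\right\rangle = \left\langle \sigma_3,\ldots,\sigma_{r-1}\right\rangle$ is normal with $G^+/R$ cyclic of order dividing $2$ (the images of all the $\rho_i$ coincide and square to the identity), so direct regularity is equivalent to $\rho_1 = \tau_{1,2} \notin R$, that is, to
\begin{equation*}
\tau_{1,2}\notin\left\langle \sigma_3,\ldots,\sigma_{r-1}\right\rangle. \tag{$\star$}
\end{equation*}
Since $\tau_{1,2} = \sigma_1\sigma_2 \in \left\langle \sigma_1,\sigma_2\right\rangle$ and $\tau_{1,2}\neq 1$, condition $(\star)$ is immediate from the instance $\left\langle \sigma_1,\sigma_2\right\rangle \cap \left\langle \sigma_3,\ldots,\sigma_{r-1}\right\rangle = \{1\}$ of $IP^+$ (delete the index sets $\{3,\ldots,r-1\}$ and $\{1,2\}$, whose union is everything).

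The main work, and the step I expect to be the real obstacle, is the intersection property $IP$ for $(\rho_1,\ldots,\rho_{r-2})$. I would argue by induction on $r$ via the recursive criterion~\cite[Proposition 2E16]{McMullenSchulte}, proving the statement in the slightly greater generality of any tuple satisfying only the string property and $IP^+$ (the chirality condition is never used). It then suffices to establish: (a) $(\rho_1,\ldots,\rho_{r-3})$ has $IP$; (b) $(\rho_2,\ldots,\rho_{r-2})$ has $IP$; and (c) the gluing relation $\left\langle \rho_1,\ldots,\rho_{r-3}\right\rangle \cap \left\langle \rho_2,\ldots,\rho_{r-2}\right\rangle = \left\langle \rho_2,\ldots,\rho_{r-3}\right\rangle$. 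Here (a) is just the present construction applied to the truncated rank-$(r-1)$ tuple $(\sigma_1,\ldots,\sigma_{r-2})$, and (b) is the construction applied to the contracted rank-$(r-1)$ tuple $(\tau_{1,2},\sigma_3,\ldots,\sigma_{r-1})$, obtained by merging $\sigma_1,\sigma_2$ into the involution $\sigma_1\sigma_2$; one checks that both tuples again satisfy the string property and $IP^+$, so (a) and (b) follow from the inductive hypothesis, which moreover supplies that the two groups are directly regular with rotation subgroups $\left\langle \sigma_3,\ldots,\sigma_{r-2}\right\rangle$ and $\left\langle \sigma_4,\ldots,\sigma_{r-1}\right\rangle$.

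The delicate point is (c). Writing $A := \left\langle \tau_{1,2},\sigma_3,\ldots,\sigma_{r-2}\right\rangle$ and $B := \left\langle \tau_{1,3},\sigma_4,\ldots,\sigma_{r-1}\right\rangle$ for the two factors, the inductive direct regularity yields the index-two coset decompositions $A = \left\langle \sigma_3,\ldots,\sigma_{r-2}\right\rangle \sqcup \tau_{1,2}\left\langle \sigma_3,\ldots,\sigma_{r-2}\right\rangle$ and $B = \left\langle \sigma_4,\ldots,\sigma_{r-1}\right\rangle \sqcup \tau_{1,3}\left\langle \sigma_4,\ldots,\sigma_{r-1}\right\rangle$. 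Taking $g \in A\cap B$ and analysing the four coset combinations, I would use exactly two tools: the interval-intersection instance $\left\langle \sigma_3,\ldots,\sigma_{r-2}\right\rangle \cap \left\langle \sigma_4,\ldots,\sigma_{r-1}\right\rangle = \left\langle \sigma_4,\ldots,\sigma_{r-2}\right\rangle$ of $IP^+$, and the non-containment $(\star)$, which kills the two mixed combinations (each would force $\tau_{1,2}\in\left\langle \sigma_3,\ldots,\sigma_{r-1}\right\rangle$). The surviving combinations place $g$ in $\left\langle \sigma_4,\ldots,\sigma_{r-2}\right\rangle$ or in $\tau_{1,3}\left\langle \sigma_4,\ldots,\sigma_{r-2}\right\rangle$, both contained in $\left\langle \rho_2,\ldots,\rho_{r-3}\right\rangle$, giving (c). Finally, the claim about the dual follows by applying the same construction to the dual $C^+$-group $(\sigma_{r-1}^{-1},\ldots,\sigma_1^{-1})$, whose generator orders are $k_{r-1},\ldots,k_1$, so the resulting type is $[k_{r-3},\ldots,k_1]$, i.e.\ $[k_1,\ldots,k_{r-3}]$ up to the reversal inherent in dualising. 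The base cases $r=3,4$ (a single involution, respectively a dihedral group) are immediate.
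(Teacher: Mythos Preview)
The paper does not give its own proof of this lemma: it simply records it as ``a rewriting, in terms of $C^+$-groups, of the fact that chiral polytopes of rank $n$ have $(n-2)$-faces that are directly regular polytopes, as it was shown in~\cite{SchulteWeiss}''. So there is nothing to compare against beyond that citation. Your approach, by contrast, is a self-contained group-theoretic argument carried out entirely at the level of the generating tuple, and it is essentially correct. The key identity $\rho_i\rho_j=\tau_{i+2,j+1}$ immediately yields the string property, the Schl\"afli type, and the identification of the rotation subgroup, and your use of the non-containment $(\star)$ both for direct regularity and to kill the mixed cosets in step~(c) is exactly the right mechanism.

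One point deserves a bit more care. For step~(b) you apply the inductive hypothesis to the contracted tuple $(\tau_{1,2},\sigma_3,\ldots,\sigma_{r-1})$, which requires that this tuple itself satisfy $IP^+$; you assert this with ``one checks'', but the check is not entirely trivial, since the parabolic $\langle \tau_{1,2},\sigma_3,\ldots,\sigma_j\rangle$ is not a standard parabolic of the original tuple. It can be handled by the same coset trick you use in~(c): write $\langle \tau_{1,2},\sigma_3,\ldots,\sigma_j\rangle=\langle\sigma_3,\ldots,\sigma_j\rangle\sqcup\tau_{1,2}\langle\sigma_3,\ldots,\sigma_j\rangle$, intersect with $\langle\sigma_i,\ldots,\sigma_{r-1}\rangle$ for $i\ge 3$, and use $(\star)$ to eliminate the non-trivial coset before invoking the interval form of $IP^+$ for the original tuple. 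With that filled in, your argument is complete and gives a clean purely algebraic proof of what the paper takes as a translation of the geometric result of Schulte--Weiss.
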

This property shows in particular that to construct a $C^+$-group representation for $G$ of rank $r$, it may be useful to know string $C$-group representations of rank $r-2$ for subgroups of $G$. It is why we study in Section~\ref{regular} the string C-group representations of subgroups of $\PSL(3,q)$.

We see the group $\PSL(3,q)$ as a permutation group acting on a projective plane $\PG(2,q)$ with the natural action.
We denote by $\epsilon$ the identity element of $\PSL(3,q)$.

\section{Chiral polytopes of rank four for $\PSL(3,q)$}\label{srank4}
We give a constructive proof of Theorem~\ref{rank4}, that is, we give sets of triples of elements of $\PSL(3,q)$ that satisfy the conditions stated in the previous section.
As stated in Theorem~\ref{rank4}, in this section, $q\geq 5$.

\begin{proof}[Proof of Theorem~\ref{rank4}]
Suppose there exist generators $\sigma_1$, $\sigma_2$ and $\sigma_3$ of $\PSL(3,q)$ such that $(\PSL(3,q),\allowbreak(\sigma_1, \sigma_2, \sigma_3))$ gives a chiral polytope of rank four.
Suppose moreover that
$\left\langle \sigma_1,\ \sigma_2 \right\rangle$ and $\left\langle \sigma_2,\ \sigma_3 \right\rangle$ are stabilisers of respectively two lines ($l$ and $l'$) and two points ($p$ and $p'$). Suppose that $\sigma_1$ stabilises both lines and $\sigma_3$ stabilises both points.

We will prove that, up to isomorphism, there is a unique family of choices for the triple $\{\sigma_1, \sigma_2, \sigma_3\}$ indexed by a parameter $x$.

If $(\PSL(3,q),(\sigma_1, \sigma_2, \sigma_3))$ gives a chiral polytope, the triple 
$(\sigma_1, \sigma_2, \sigma_3)$ satisfies the intersection property.
Hence the intersection of $\left\langle \sigma_1,\ \sigma_2 \right\rangle$ and $\left\langle \sigma_2,\ \sigma_3 \right\rangle$ has to be a cyclic group.
This gives some restrictions on the lines and points we can choose.
We can assume that $p$ and $p'$ do not belong to any of the two lines. Hence, we may choose a system of coordinates of $\PG(2,q)$ such that
\begin{itemize}
    \item $(1:0:0)$ is the intersection of the two lines $l$ and $l'$,  
    \item $(0:1:0)$ is the point $p$,  
    \item $(0:0:1)$ is the point $p'$,  
    \item $(0:1:1)$ is the intersection of the two lines $l$ and $pp'$ and
    \item $(0:1:x)$ is the intersection of the two lines $l'$ and $pp'$ (for some $x \not= 0,1$). 
\end{itemize}
Let us remark that $(\sigma_1\sigma_2\sigma_3)^2 = \epsilon$ and $(\sigma_1\sigma_2)^2 = \epsilon$ imply that $(\sigma_1\sigma_2)\sigma_3(\sigma_1\sigma_2)^{-1} = \sigma_3^{-1}$. Therefore $\sigma_1\sigma_2$ induces an involution on the orbits of $\sigma_3$ (which are the same as the orbits of $\sigma_3^{-1}$). In particular, it induces an involution on the fixed points and fixed lines of $\sigma_3$.


We have $x \not= -1$ and, for some elements $a,b$ of the field such that $b = ax$ and $ab$ is primitive, we have
\[ \sigma_2 = \begin{pmatrix}
    -a^{-1}b^{-1} & 0 & 0 \\
    0 & 0 & a \\
    0 & b & 0
    \end{pmatrix}.\]
Since the products $\sigma_1\sigma_2$ and $\sigma_2\sigma_3$ are involutions, we get that
\[ 
    \sigma_1^C = \begin{pmatrix}
    ab & uy & u \\
    0 & a^{-1}y & 0 \\
    0 & 0 & b^{-1}y^{-1}
    \end{pmatrix}
    \text{ and }
    \sigma_3 = \begin{pmatrix}
    ab & 0 & 0 \\
    vz & a^{-1}z & 0 \\
    v & 0 & b^{-1}z^{-1}
    \end{pmatrix}
    \text{ with }
    C = \begin{pmatrix}
    1 & 0 & 0 \\
    0 & 1 & 1 \\
    0 & 1 & x
    \end{pmatrix}.
\]

Since $\sigma_3$ is conjugate to its inverse, we have a restriction on its eigenvalues. By taking a suitable representative, its eigenvalues have the form $1,\lambda,\lambda^{-1}$. Note that $\lambda$ is a primitive element for otherwise $\left\langle \sigma_2,\ \sigma_3 \right\rangle$ would not act transitively on the line $pp'$ whose points $p$ and $p'$ have been removed. As $q\geq 5$, we have that $\lambda \not= \pm 1$ and $\sigma_3$ has three different eigenvalues. 

Since $\sigma_1\sigma_2$ induces an involution on the three fixed points of $\sigma_3$, it can stabilise $p$ or stabilise $p'$ or exchange $p$ and $p'$. Therefore, $\sigma_1$ can send $p$ to $p'$ or send $p'$ to $p$ or stabilise both $p$ and $p'$. In the first case, $y=x^{-1}$. In the second case, $y=1$. In the third case $\left\langle \sigma_1,\ \sigma_2,\ \sigma_3\right\rangle$ would stabilise the set $\left\{p,p' \right\}$ and hence  $\left\langle \sigma_1,\ \sigma_2,\ \sigma_3\right\rangle\neq \PSL(3,q)$, a contradiction. Using a similar proof, one can see that $\sigma_3$ either sends $l$ to $l'$ (case $z=x^{-1}$) or sends $l'$ to $l$ (case $z=1$).

By eventually exchanging the roles of $p$ and $p'$ and the roles of $l$ and $l'$, we can make sure that $\sigma_1$ sends $p'$ to $p$ and $\sigma_3$ sends $l'$ to $l$. We then have
\[ \sigma_1 =
\begin{pmatrix}
    ab & u & 0 \\
    0 & a^{-1}+b^{-1} & - b^{-1} \\
    0 & a^{-1} & 0 \\
\end{pmatrix}
\text{ and }\sigma_3 = \begin{pmatrix}
    ab & 0 & 0 \\
    v & a^{-1} & 0 \\
    v & 0 & b^{-1}
\end{pmatrix}.\]

The condition $(\sigma_1\sigma_2\sigma_3)^2 = \epsilon$ implies that $a^3 = 1$. Hence, by choosing a good representative for the projective transformation $\sigma_2$, one can assume $a=1$ and $b=x$. Note that $ab=x$ has to be primitive. Finally, this condition also yields $uv=x-x^{-1}$. By choosing a suitable basis, we can assume $u=1$ which gives the three generators in the statement of the theorem.

Now, we have to verify that these three generators satisfy the intersection property.
Hence, we have to verify the following three equalities.
\[ \left\langle \sigma_1 \right\rangle \cap \left\langle \sigma_2 \right\rangle = \set{\epsilon},\quad
\left\langle \sigma_2 \right\rangle \cap \left\langle \sigma_3 \right\rangle = \set{\epsilon}\ \text{and}\ 
\left\langle \sigma_1,\ \sigma_2 \right\rangle \cap \left\langle \sigma_2,\ \sigma_3 \right\rangle =\left\langle \sigma_2 \right\rangle.
\]
Let us remark that the powers of $\sigma_2$ either acts as an involution on the line $pp'$ (which exchanges $p$ with $p'$ and $(0:1:1)$ with $(0:1:x)$) or fixes every point of the line $pp'$. This is not the case for any power of $\sigma_1$ or $\sigma_3$ which is not $\epsilon$. This proves the first two equalities. For the last equality, one can see that with our choice of $\sigma_2$ it generates all transformations which stabilise $\set{p,p'}$ and $\set{l,l'}$. Moreover, $\sigma_1$ and $\sigma_2$ are transformations which stabilise $\set{l,l'}$. Similarly, $\sigma_2$ and $\sigma_3$ are transformations which stabilise $\set{p,p'}$.

The duality is given by
\[ \phi : \PSL(3,q) \to \PSL(3,q) : \sigma \to D^{-1} \sigma^{-t} D \text{ with }
D = \begin{pmatrix}
    \left( x-x^{-1} \right)^{-1} & 0 & 0 \\
    0 & 1 & 1 \\
    0 & 1 & x
    \end{pmatrix}\]

We now prove that $\left\langle \sigma_2,\ \sigma_3 \right\rangle$ is the whole group $H$ of projective transformations which stabilise $\set{p,p'}$. Let us first examine $H$. It contains a subgroup $H_1$ of index $2$ which consists of transformations that fix both $p$ and $p'$. This subgroup $H_1$ is the semi-direct product of a normal subgroup $K$ of order $q^2$ and a subgroup $L$ of order $\frac{(q-1)^2}{(3,q-1)}$. The subgroup $K$ consists of nilpotent transformations while the subgroup $L$ consists of semisimple ones. The element $\sigma_2$ belongs to $H$ but not to $H_1$. In each coset of $K$ in $H_1$, one can find an element of the form $\sigma_2^{2m}\sigma_3^n$. It remains to see that $K$ itself can be generated by suitable combinations of $\sigma_2$ and $\sigma_3$. One has
\[\sigma_3^{-k}\sigma_2^{2m}\sigma_3^{-1}\sigma_2^{2n-2m}\sigma_3\sigma_2^{-2n}\sigma_3^{k} = 
    \begin{pmatrix}
    1 & 0 & 0 \\
    \left( x^{3m}-x^{3n} \right)\left( 1 - x^2 \right) x^k & 1 & 0 \\
    \left( x^{3m}-x^{3n} \right)\left( 1 - x^2 \right) x^{2k} & 0 & 1
    \end{pmatrix}.
\]
As $q\geq 5$, those elements are sufficient to generate $K$. We conclude that $\sigma_2$ and $\sigma_3$ generate $H$. Using the duality, we see that $\sigma_1$ and $\sigma_2$ generate the whole group of projective transformations which stabilise $\set{l,l'}$.

From this it is easy to see that $\left\langle \sigma_1,\ \sigma_2,\ \sigma_3 \right\rangle = \PSL(3,q)$.

It remains to prove that the polytope we created is a chiral one. For this, we need to prove that there does not exist an automorphism which sends $\sigma_1$ to its inverse, and which fixes $\sigma_1\sigma_2$ and $\sigma_1\sigma_2\sigma_3$. Note that our automorphism, if it exists, cannot exchange points and lines since $\left\langle\sigma_1,\ \sigma_2\right\rangle$ is the stabiliser of two lines. Therefore, our automorphism is given by a conjugation in $\PGaL(3,q)$. The involution $\sigma_1\sigma_2$ has center $(1,1+x^{-1},2)$ and axis $Z=0$ while the involution $\sigma_1\sigma_2\sigma_3$ has center $(1,1+x^{-1},1+x)$ and axis $(1-x)X=Z$. The projectivity $\sigma_1$ fixes three points namely $(1,0,0)$, $(1,1-x,1-x)$ and $(1,x^{-1}-x,1-x^2)$.

We decompose our automorphism as a part given by conjugation by $\phi \in \PGL(3,q)$ and a part induced by the field automorphism $\lambda \to \lambda^{p^i}$. We use firstly that $(0,1,0)$ is fixed (as the intersection of the two axes), and secondly that $(1,0,0)$ is fixed (it cannot be exchanged with the two other fixed points as $Z=0$ is fixed). We get that $\phi$ has the form
\[ \begin{pmatrix}
 a & 0& e \\ 0 & b & d \\ 0 & 0 & c
\end{pmatrix}. \]
We then use the fact that our automorphism fixes $\sigma_3$. The projectivity $\sigma_3$ has fixed points $(x,1+x,x)$, $(0,1,0)$ and $(0,0,1)$ with eigenvalues $x$, $1$ and $x^{-1}$ respectively (up to a scalar). The two points $(x,1+x,x)$ and $(0,0,1)$ could be exchanged by our automorphism only if $x^{p^i}=x^{-1}$, which is impossible by our choice of $x$. Therefore, they are both fixed and $d=e=0$ and $a=c$. Then it is impossible for our projectivity to send $\sigma_1$ to its inverse and the polytope has to be chiral.
\end{proof}
\section{Chiral polytopes of rank five for $\PSL(3,q)$}\label{srank5}
We give a constructive proof of Theorem~\ref{rank5}, that is, we give sets of 4-tuples of elements of $\PSL(3,q)$ that satisfy the conditions stated in Section~\ref{prelim}.

\begin{proof}[Proof of Theorem~\ref{rank5}]
    It is easy to check that the products of successive generators are involutions. One has to verify that the intersection property is satisfied. The generated subgroups are (where $\ast$ denotes an arbitrary element of $\FF_q$ and $j =0,1$ or $2$)
    \[
    \left\langle \sigma_1,\sigma_2 \right\rangle
    = \set{\begin{pmatrix}
        \pm 1 & 0 & \ast \\
        0 & \pm \omega^{ij} & \ast \\
        0 & 0 & \omega^{2ij}
    \end{pmatrix}},
    \left\langle \sigma_1,\sigma_2,\sigma_3 \right\rangle
    = \set{\begin{pmatrix}
        \pm 1 & \ast & \ast \\
        0 & \ast & \ast \\
        0 & \ast & \ast
    \end{pmatrix}} \]
    and
    $\left\langle \sigma_1,\sigma_2,\sigma_3,\sigma_4 \right\rangle = \PSL(3,q)$.
    For the other subgroups, one needs to distinguish depending on the value of $k$. Firstly, we consider the case $k = -1$. The subgroup $\left\langle \sigma_3,\sigma_4 \right\rangle$ preserves the four points $(1,0,0)$, $(0,1,0)$, $(0,0,1)$ and $(1,1,1)$. The generators induce cycles of order $3$ on those four points and generate together the alternating group $\Alt(4)$. Since the action on the four points is faithful, $\left\langle \sigma_3,\sigma_4 \right\rangle \cong \Alt(4)$. The subgroup $\left\langle \sigma_2,\sigma_3,\sigma_4 \right\rangle$ preserves the conic $XY + \omega YZ + \omega^2 ZX = 0$.
    
    Secondly, we consider the case $k = 1$. Then the subgroup $\left\langle \sigma_2,\sigma_3,\sigma_4 \right\rangle$ preserves the point $(1,\omega,\omega^2)$ while the subgroup $\left\langle \sigma_3,\sigma_4 \right\rangle$ also preserves the point $(1,\omega^2,\omega)$. We can see that the dual of the polytope with $(k,i) = (1,1)$ is the polytope with $(k,i) = (1,2)$. The isomorphism between the two is given by a conjugation by
    \[ \begin{pmatrix}
        \omega & \omega^2 & 1 \\ \omega^2 & \omega & 1 \\ 1 & 1 & 1
    \end{pmatrix}. \]
    For the intersection property, it remains to verify that  $\left\langle \sigma_1,\sigma_2,\sigma_3 \right\rangle \cap \left\langle \sigma_2,\sigma_3,\sigma_4 \right\rangle = \left\langle \sigma_2,\sigma_3\right\rangle$, i.e. that $\left\langle \sigma_2,\sigma_3 \right\rangle$ is the group of elements that preserve $(1,0,0)$ and $(1,\omega,\omega^2)$. This is a long but easy computation thus we omit the details here.
    
    Finally, there is no automorphism which sends $\sigma_1$ to $\sigma_1^{-1}$ and preserves the three involutions $\sigma_1\sigma_2$, $\sigma_1\sigma_2\sigma_3$ and $\sigma_1\sigma_2\sigma_3\sigma_4$. Indeed, every automorphism of $\PSL(3,q)$ is given by a conjugation in $\PGaL(3,q)$ or by a duality which exchanges points and lines. It cannot be a duality here since the stabiliser of a point would be transformed in the stabiliser of a line. Moreover, since $q = p$ is a prime, there is no field automorphism. Hence, the automorphism would be given by a conjugation by an element $\phi$ of $\PGL(3,q)$. Such a projectivity $\phi$ would need to preserve the axes and the centers of our three involutions. It needs to preserve the axes $Z=0$, $Y=Z$ and $X=Y$, and the centers $(1,1,-2)$, $(0,1,-1)$ and $(1,-1,0)$. This implies that $\phi$ is the identity, in which case it cannot conjugate $\sigma_1$ with its inverse and the corresponding polytope is necessarily chiral.
\end{proof}
This construction provides six chiral polytopes if we also count the duals of the polytopes of type $[3,6,3,3]$. Up to now, no other polytope of rank $5$ is known for the group $\PSL(3,q)$. The choice of $i =1$ or $2$ amounts to choose one primitive third root of unity or the other. Finally, if $q = p^2$ where $p$ is a prime number congruent to $5$ modulo $6$, this construction does not yield a chiral polytope but a directly regular one. Indeed, there is a field automorphism $\phi$ which sends $\sigma_1$ to its inverse and preserves the three involutions in this case.

\section{Regular polytopes for subgroups of $\PSL(3,q)$}\label{regular}

Before proving Theorem~\ref{rank6}, we need to determine the directly regular polytopes of rank four that have a subgroup of $\PSL(3,q)$ as automorphism group. In order to do that, we first classify the regular polytopes of rank two for subgroups of $\PSL(3,q)$, then the regular polytopes of rank three and four for subgroups of $\PSL(3,q)$ with $q$ odd, and finally the regular polytopes of rank up to four for the subgroups of $\PSL(3,q)$ with $q$ even, dividing that into the case where the subgroups are triangular subgroups and the case where they are not.

\subsection{Regular polytopes of rank $2$}
Rank two polytopes have automorphism groups that are dihedral groups.

We list the dihedral subgroups in $\PSL(3,q)$ up to isomorphism. In each case we give a generator $\sigma$ of the cyclic subgroup of index $2$ and an involution $\tau$ which, together with $\sigma$, generates the dihedral subgroup.

There are two cases where we get a dihedral group of order $4$, which is abelian, namely

\[\setlength{\arraycolsep}{3pt}\begin{array}{ll}
 \textbf{Case A : }
\sigma = \begin{pmatrix}
    1 & 1 & 0 \\
    0 & 1 & 0 \\
    0 & 0 & 1
\end{pmatrix}  & \text{and }
\tau = \begin{pmatrix}
    1 & a & 0 \\
    0 & 1 & 0 \\
    0 & 0 & 1
\end{pmatrix} \text{ in even characteristic.}
\end{array}
\]

\[\setlength{\arraycolsep}{3pt}\begin{array}{ll}
\textbf{Case B : }
\sigma = \begin{pmatrix}
    -1 & 0 & 0 \\
     0 & -1 & 0 \\
     0 &  0 & 1
\end{pmatrix}  & \text{and }
\tau = \begin{pmatrix}
    1 & 0 & 0 \\
    0 & -1 & 0 \\
    0 &  0 & -1
\end{pmatrix} \text{ in odd characteristic.}\\
\end{array}
\]

In the non-abelian cases, we get the following.

\[\setlength{\arraycolsep}{3pt}
\begin{array}{ll}
   \textbf{Case 1 : }
\sigma = \begin{pmatrix}
    x & 0 & 0 \\
    0 & 1 & 0 \\
    0 & 0 & x^{-1}
\end{pmatrix}& \text{and }
\tau = \begin{pmatrix}
    0 & 0 & 1 \\
    0 & -1 & 0 \\
    1 & 0 & 0
\end{pmatrix}
 
\begin{matrix}
   \text{ with } x \text{ an element of } \FF_q \\
   \text{ different from }0, \pm 1.
\end{matrix}
\end{array}
\]

\hspace{-2cm}\[\setlength{\arraycolsep}{3pt}
\begin{array}{ll}
   \textbf{Case 2 : }
\sigma = \begin{pmatrix}
    1 & 1 & 0 \\
    0 & 1 & 1 \\
    0 & 0 & 1
\end{pmatrix}& \text{and }
\tau = \begin{pmatrix}
    -1 & -1 & 0\\
     0 & 1 & 0 \\
    0 & 0 & -1
\end{pmatrix}.
\end{array}
\]

\[\setlength{\arraycolsep}{3pt}
\begin{array}{ll}
 \textbf{Case 3 : }
\sigma = \begin{pmatrix}
    \pm 1 & 1 & 0 \\
    0 & \pm 1 & 0 \\
    0 & 0 & 1
\end{pmatrix}& \text{and }
\tau = \begin{pmatrix}
    1 & 0 & 0 \\
    0 & -1 & 0 \\
    0 & 0 & -1
\end{pmatrix} \text{ in odd characteristic.}
\end{array}
\]

\[\setlength{\arraycolsep}{3pt}
\begin{array}{ll}
 \textbf{Case 4 : }
\sigma = \begin{pmatrix}
    1 & 0 & 0 \\
    0 & 0 & -1 \\
    0 & 1 & x
\end{pmatrix}&  \text{and }
\tau = \begin{pmatrix}
    -1 & 0 & 0\\
     0 & 1 & x \\
    0 & 0 & -1
\end{pmatrix}\begin{matrix}
    \text{ with } x \text{ an element of }\FF_q\\
    \text{ such that }X^2 - xX + 1 \\
    \text{ is irreducible.}
\end{matrix}
\end{array}
\]

The element $\sigma$ has at most order $q-1$, $2\cdot GCD(2,p)$, $2p$ and $q+1$ respectively. Hence, the order of any dihedral subgroup is a divisor of $2(q-1)$, $2(q+1)$ or $4p$. 

\subsection{Regular polytope of rank $3$ for $q$ odd}

We want to search for involutions $\rho_1$, $\rho_2$ and $\rho_3\in \PSL(3,q)$ such that $(\langle \rho_1,\rho_2,\rho_3\rangle,\{ \rho_1,\rho_2,\rho_3\})$ is a string C-group. Let $G=\langle \rho_1,\rho_2,\rho_3\rangle$. The parabolic subgroup $G_2 := \langle \rho_1,\rho_3\rangle$ preserves a triangle in $\PG(2,q)$. As the group $\PGL(3,q)$ acts transitively on the triangles of $\PG(2,q)$ and $G\leq \PGL(3,q)$, we can choose homogeneous coordinates in $\PG(2,q)$ so that the vertices of this triangle are the points with respective coordinates $(1,0,0)$, $(0,1,0)$ and $(0,0,1)$. Hence, $G_2$ consists of the four transformations below.
\[ 
\begin{pmatrix} \pm1 & 0 & 0 \\ 0 & \pm1 & 0 \\ 0 & 0 & \pm1 \end{pmatrix}.
\]
Among those four transformations, we denote by $\phi_1$, $\phi_2$ and $\phi_3$, the involutions respectively with center $(1,0,0)$, $(0,1,0)$ and $(0:0:1)$.

If $\rho_2$ does not stabilise an edge nor a vertex, then, by~\cite{BV2010}, the group $\Gamma$ preserves a bilinear form. We leave this case for later and we consider the case where $\rho_2$ stabilises one of the two. Then either the axis of $\rho_2$ contains a vertex or its center is contained in an edge. Using the duality between lines and points in the projective plane, we can assume that the axis contains at least one vertex. We prove the following result.

\begin{theorem}
    Let $G := \langle \rho_1, \rho_2,\rho_3 \rangle \leq \PSL(3,q)$  be such that $(G,\{ \rho_1, \rho_2,\rho_3\})$ is a string $C$-group of rank $3$.  Suppose that $G$ stabilises a point or a line of $\PG(2,q)$. Then $G$ is one of the following subgroups
    \[ D_{4p}, D_{2k}, D_{2p}^2, D_{2p} \wr C_2, He_p : C_2^2 \text{ or }
    E_{q'}^2 : D_{2k} \]
    where $q'$ is the smallest root of $q$ such that $k$ divides $q' \pm 1$.
\end{theorem}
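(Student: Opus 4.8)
The plan is to set up coordinates as indicated and then do a systematic case analysis driven by the position of the axis of $\rho_2$ relative to the fixed triangle of $G_2 = \langle \rho_1,\rho_3\rangle$. By the duality between points and lines in $\PG(2,q)$ we may assume the axis of $\rho_2$ contains at least one vertex of the triangle $\{(1,0,0),(0,1,0),(0,0,1)\}$; since $\rho_2$ is an involution conjugate to $\operatorname{diag}(1,-1,-1)$ up to sign, it is determined by its center and axis (a point not on the axis together with a line), so the enumeration reduces to counting the admissible (center, axis) pairs. I would organize the case split by how many vertices the axis of $\rho_2$ passes through (one or two — it cannot contain all three, as then $\rho_2$ would lie in $G_2$ and $G$ would be dihedral of one of the small types), and, within each, where the center of $\rho_2$ sits (a vertex, a point on a side, or a point in general position). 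In each configuration one writes down the most general matrix for $\rho_2$ compatible with that configuration, possibly after a further normalization by the subgroup of $\PGL(3,q)$ fixing the triangle and the chosen incidences (this residual torus/monomial group is what lets us reduce several free parameters to one), and then computes $G = \langle \phi_i, \rho_2\rangle$ for the relevant choice of $\phi_i \in G_2$.

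The key steps, in order, are: (1) record that $G_2$ is the Klein four-group of diagonal sign matrices and name $\phi_1,\phi_2,\phi_3$; (2) reduce, via duality and the conjugacy class of involutions, to finitely many combinatorial types of $(\text{center},\text{axis})$ for $\rho_2$ subject to the string condition $[\rho_2,\rho_1]=[\rho_2,\rho_3]$ failing for exactly the two commuting pairs — i.e. $\rho_2$ must commute with neither $\rho_1$ nor $\rho_3$ if the Schläfli type is to be non-degenerate, but we also want to capture degenerate types, so really we just impose that $\{\rho_1,\rho_2,\rho_3\}$ generate and satisfy IP; (3) for each type, normalize and identify the generated group, matching it against the list $D_{4p}$, $D_{2k}$, $D_{2p}^2$, $D_{2p}\wr C_2$, $He_p:C_2^2$, $E_{q'}^2:D_{2k}$; (4) for the groups containing unipotent elements (the $He_p$, $D_{2p}$, $D_{4p}$ cases) verify the $p$-part of the order by an explicit commutator computation as in the rank-four proof, and for the $E_{q'}^2:D_{2k}$ case identify $q'$ as the smallest power of $q$ — wait, the smallest root of $q$ — for which $k \mid q'\pm 1$, which comes from asking when a cyclic torus element of order $k$ has its eigenvalues defined over a subfield; (5) check in each surviving case that the intersection property $IP$ actually holds, discarding configurations where $\langle\rho_1,\rho_2\rangle \cap \langle\rho_2,\rho_3\rangle$ is strictly larger than $\langle\rho_2\rangle$.

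The main obstacle I expect is step (3)–(4): ensuring the case list is genuinely exhaustive and that no configuration yields a group outside the stated six families. Several subcases will look superficially different but give isomorphic groups after conjugation, and it is easy to double-count or, worse, to miss a degenerate configuration in which the axis of $\rho_2$ passes through two vertices and the center lies on the opposite side — this tends to produce the wreath product $D_{2p}\wr C_2$ or the direct square $D_{2p}^2$ depending on a delicate choice of signs, and getting the bookkeeping of those signs right (they are what distinguishes $D_{2p}^2$ from $D_{2p}\wr C_2$) is the fiddly heart of the argument. A secondary difficulty is the precise description of the affine case $E_{q'}^2 : D_{2k}$: here $\rho_2$'s axis meets the triangle in a way that forces a translation-like unipotent part, $G$ stabilizes a point (so sits inside the affine group $E_q^2 : \GL(2,q)$-type stabilizer modulo scalars), and one must argue that the linear part generated is dihedral of order $2k$ and that the translation subgroup it generates is exactly a $2$-dimensional $\FF_{q'}$-space — i.e. that $k$ dividing $q'\pm1$ is both necessary and sufficient for the orbit of a translation under the dihedral action to span an $\FF_{q'}$-plane and no smaller field. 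I would handle that last point by diagonalizing the order-$k$ element over the splitting field and tracking the field of definition of the fixed translation lattice.
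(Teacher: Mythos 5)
Your plan follows essentially the same route as the paper's proof: the same normalization of $G_2=\langle\rho_1,\rho_3\rangle$ to the four diagonal sign matrices $\phi_1,\phi_2,\phi_3$, the same duality reduction placing a vertex of the triangle on the axis of $\rho_2$, the same case split by the incidence pattern of the center and axis of $\rho_2$ with the triangle, and the same final affine analysis identifying $E_{q'}^2:D_{2k}$ by tracking the subfield generated by the free parameter in $\rho_2$. The one refinement you would need to discover while executing step (3) is that, when the axis of $\rho_2$ meets the triangle in exactly one vertex, the relevant invariant for the center is not merely ``vertex / on a side / general position'' but also its position relative to the harmonic conjugate of the axis with respect to the two adjacent edges --- this is what produces the paper's six subcases there and in particular isolates the configuration giving $D_{2k}$.
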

\begin{proof}
If the axis of $\rho_2$ contains two vertices, it is an edge of the triangle. Then its center can either be the third vertex, or a point of an edge, or a point outside of the triangle. Those three situations correspond to the following matrices for $\rho_2$
\[
\begin{pmatrix} 1 & 0 & 0 \\ 0 & -1 & 0 \\ 0 & 0 & -1 \end{pmatrix} \text{ or }
\begin{pmatrix} 1 & 0 & 0 \\ 1 & -1 & 0 \\ 0 & 0 & -1 \end{pmatrix} \text{ or }
\begin{pmatrix} 1 & 0 & 0 \\ 1 & -1 & 0 \\ 1 & 0 & -1 \end{pmatrix}.
\]

If the axis of $\rho_2$ contains exactly one vertex, then we  call the lines which contain this vertex adjacent lines and the line which does not the opposite line. We can look at those three lines through the vertex (i.e. the axis and the two adjacent edges). There is a fourth line through this vertex which forms a harmonic quadruple together with them. We have two choices to make to place the center. Firstly, we choose whether or not it is on the opposite line. Secondly, we choose if the center belongs to the harmonic conjugate, or to an adjacent line or to none of the preceding. There are two possibilities for the first choice and three for the second. It yields a total of six possibilities. Those six possibilities are given below.
\[
\begin{pmatrix} 1 & -2 & 0 \\ 0 & -1 & 0 \\ 0 & 0 & -1 \end{pmatrix},
\begin{pmatrix} 0 & -1 & 0 \\ -1 & 0 & 0 \\ 0 & 0 & -1 \end{pmatrix},
\begin{pmatrix} a & -a-1 & 0 \\ a-1 & -a & 0 \\ 0 & 0 & -1 \end{pmatrix},\]\[
\begin{pmatrix} 1 & -2 & 0 \\ 0 & -1 & 0 \\ 1 & -1 & -1 \end{pmatrix},
\begin{pmatrix} 0 & -1 & 0 \\ -1 & 0 & 0 \\ 1 & -1 & -1 \end{pmatrix},
\begin{pmatrix} a & -a-1 & 0 \\ a-1 & -a & 0 \\ 1 & -1 & -1 \end{pmatrix}.
\]

Hence, we have found nine cases for $\rho_2$. Note however that the second and the fourth cases are dual. Therefore, we can remove the fourth and there remain eight cases. We now show that the subgroups they generate are respectively $D_4$, $D_{4p}$, $D_{2p}^2$, $D_8$, $D_{2k}$, $He_p : C_2^2$, $D_{2p} \wr C_2$ and $E_{q'}^2 : D_{2k}$.

In the first case, $\rho_2$ is in $G_2$ which is impossible.

In the second case, 
\[ G = \set{ \begin{pmatrix}
    \pm 1 & 0 & 0 \\ a & \pm 1 & 0 \\ 0 & 0 & \pm 1
\end{pmatrix} \middle| a \in \FF_p} \cong D_{4p} \]

In the third case, 
\[ G = \set{ \begin{pmatrix}
    \pm 1 & 0 & 0 \\ a & \pm 1 & 0 \\ b & 0 & \pm 1
\end{pmatrix} \middle| a,b \in \FF_p} \cong D_{2p}^2 \]

In the fourth case, it is not possible to choose $\rho_1$ and $\rho_3$ in $G_2$ and to respect the intersection property $IP$. Indeed, we have $\phi_1\rho_2\phi_1 = \phi_2\rho_2\phi_2 = \phi_3\rho_2$.

 In the fifth case, the involution $\phi_3$ is in the center of $G$. Note that either $\phi_1\rho_2$ or $\phi_2\rho_2$ has even order. Indeed, by computing the characteristic polynomials, one sees that, if $\phi_1\rho_2$ has eigenvalues $1$, $\lambda$ and $\lambda^{-1}$ ($\lambda \not= \pm 1$), then $\phi_2\rho_2$ has eigenvalues $1$, $-\lambda$ and $-\lambda^{-1}$. Without loss of generality, we suppose that $\phi_1\rho_2$ has even order $2k$. Then, the dihedral subgroup $D_{4k}$ generated by $\phi_1$ and $\rho_2$ contains $\phi_3$ (and $\phi_2$) since $(\phi_1\rho_2)^k= \phi_3$. Therefore, the generated subgroup $\Gamma$ is dihedral.
Dihedral subgroups representations as string $C$-groups were investigated in~\cite{ConnorLeemans_Suzuki} where it is shown that the only possible string $C$-group structure of rank $3$ appears for dihedral subgroups $D_{4k}$ with $k$ odd and yields Schl\"afli type $[k,2]$.

In the sixth case, we can replace $\rho_2$ by 
\[\begin{pmatrix}
    -1 & 0 & 0 \\ 2 & 1 & 0 \\ 2 & 2 & -1
\end{pmatrix}\]
using a conjugation. With this change, we have
\[ \Gamma = \set{ \begin{pmatrix}
    \pm 1 & 0 & 0 \\ a & \pm 1 & 0 \\ b & c & \pm 1
\end{pmatrix} \middle| a,b,c \in \FF_p} \cong He_p : C_2^2 \]

In the seventh case,
\[ \Gamma = \set{ \begin{pmatrix}
    \pm 1 & 0 & 0 \\ 0 & \pm 1 & 0 \\ a & b & \pm 1
\end{pmatrix},\begin{pmatrix}
    0 & \pm 1 & 0 \\ \pm 1 & 0 & 0 \\ a & b & \pm 1
\end{pmatrix}  \middle| a,b \in \FF_p} \cong D_{2p} \wr C_2 \]

The eight case will need a deeper examination. We want to prove that the generated subgroup is $E_{q'}^2 : D_{2k}$. We start by constructing the elementary abelian group $E_{q'}^2$. We let
\[  M(x,y) = \begin{pmatrix}
    1 & 0 & 0 \\ 0 & 1 & 0 \\ x & y & 1
\end{pmatrix}. \]
Using this notation, we have
\[  M(x,y)M(x',y') = M(x+x',y+y'),\]
\[M(x,y)^{\phi_1} = M(-x,y),\] 
\[M(x,y)^{\phi_2} = M(x,-y),\]
\[M(x,y)^{\phi_3} = M(-x,-y), \]
and finally
\[ M(x,y)^{\rho_2} = M((-a)x+(-a+1)y,(a+1)x+ay). \]
We can therefore deduce that the set of matrices $M(x,y)$ form a subgroup $H$ of $\PSL(3,q)$ which is stable under conjugation by $G$. We  
now determine which subgroups of $H$ are also stable under this conjugation. Let $K$ be such a subgroup and let us suppose that $M(x,y) \in K$. Then,
\[ M(2x,0) = M(x,y)\phi_2M(x,y)\phi_2 \in K \text{ and } M(x,0) = M(2x,0)^\frac{p+1}{2} \in K.\]
Therefore, 
\[ M( (-a)x, (a+1)x) = \rho_2M(x,0)\rho_2 \in K \text{ and } M(ax,0) \in K. \]
Iterating this argument, one can see that $M(a^ix,0)$ is in $K$ and using a similar argument that $M(0,a^iy)$, $M(a^i(a-1)y,0)$ and $M(0,a^i(a+1)x)$ are also in $K$ for all non-negative integers $i$. By adding such terms, one can get any term of the form
\[ M( (k_0 + k_1a + \cdots + k_na^n)x + (k_0' + k_1'a + \cdots + k_n'a^n)(a-1)y,\]\[(l_0' + l_1'a + + \cdots + l_m'a^m)(a+1)x + (l_0 + l_1a + + \cdots + l_ma^m)y ) \in K \]
where the $k_i$'s and the $l_i$'s belong to $\FF_p$. If we define $\FF_{q'}$ to be the subfield of $\FF_q$ generated by $a$, then we can get any term of the form $M(kx + k'(a-1)y,l'(a+1) + ly)$ where $k$, $k'$, $l$ and $l'$ belongs to $\FF_{q'}$. One can easily see that by using any kind of manipulation, we will still get a term of this form.

Now, we can see that $M(2,-2) = (\rho_2\phi_3)^2$ belongs to $\Gamma$. Henceforth, $\Gamma$ contains the following elementary abelian normal subgroup
\[ E_{q'}^2 = \set{ \begin{pmatrix}
    1 & 0 & 0 \\ 0 & 1 & 0 \\ x & y & 1
\end{pmatrix} \middle| x,y \in \FF_{q'}}. \]
Moreover, it also contains
\[
M(-1,1)\rho_2 = \begin{pmatrix} a & -a-1 & 0 \\ a-1 & -a & 0 \\ 0 & 0 & -1 \end{pmatrix}.
\]
Thus, it contains the subgroup $D_{2k}$ from the fifth case. Therefore, we can conclude that $E_{q'}^2 : D_{2k}$ is contained in $\Gamma$ and since it already contains each generators of $\Gamma$, it is $\Gamma$.
\end{proof}

We now consider the case where $\rho_2$ does not preserve an edge or a vertex of the triangle.

\begin{theorem}\label{thm:5.2}
    Let $G := \langle \rho_1, \rho_2,\rho_3 \rangle \leq \PSL(3,q)$  be such that $(G,\{ \rho_1, \rho_2,\rho_3\})$ is a string $C$-group of rank $3$.  Suppose that $G$ does not stabilise a point or a line. Then $G$ preserves a conic and is one of the following subgroups where $q'$ is a root of $q$.
    \[ \Sym(4), \Alt(5), \POmega(3,q'), \PSO(3,q') \]
\end{theorem}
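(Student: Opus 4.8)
The plan is to exploit the result of~\cite{BV2010} invoked just before the statement: since $\rho_2$ does not stabilise an edge or a vertex of the $\rho_1,\rho_3$-triangle, the group $G$ preserves a nondegenerate bilinear (or quadratic) form $Q$, hence is contained in a conjugate of $\PSO(3,q)\cong \PGL(2,q)$ via the isomorphism coming from the action on the conic $\mathcal{C}$ defined by $Q$. First I would make this identification precise: fix coordinates so that $G\le \PSO(3,q)$, and transport the problem to $\PGL(2,q)$ acting on the projective line $\mathcal{C}\cong \PG(1,q)$. Under this identification the three involutions $\rho_1,\rho_2,\rho_3$ become involutions of $\PGL(2,q)$, and the string property (that $\rho_1\rho_3$ has order $2$, i.e.\ $\rho_1$ and $\rho_3$ commute) is preserved. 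So the task reduces to classifying string $C$-groups of rank three inside $\PGL(2,q)$ whose associated polytope is non-degenerate in the sense that neither $\rho_1\rho_2$ nor $\rho_2\rho_3$ has order $2$ (degeneracy would force $\rho_2$ to commute with $\rho_1$ or $\rho_3$, and one checks this returns us to the stabiliser-of-a-point case already excluded, or to a reducible action).

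Next I would appeal to (or reprove) the known classification of the subgroups of $\PGL(2,q)$ generated by three involutions $\rho_1,\rho_2,\rho_3$ with $[\rho_1,\rho_3]=1$ and satisfying the intersection property. Because $\rho_1\rho_3$ is an involution, $\langle\rho_1,\rho_3\rangle\cong C_2\times C_2$; the group $G=\langle\rho_1,\rho_2,\rho_3\rangle$ is then a smooth quotient of the full $[k_1,2,k_2]$ Coxeter group where $k_1,k_2$ are the orders of $\rho_1\rho_2$ and $\rho_2\rho_3$. The subgroups of $\PGL(2,q)$ are classified (Dickson): cyclic, dihedral, $\Alt(4)$, $\Sym(4)$, $\Alt(5)$, the Borel-type groups $E_q^{} : C_m$, and $\PSL(2,q')$, $\PGL(2,q')$ for subfields $\FF_{q'}\le\FF_q$. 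The abelian, cyclic and Borel (point-stabiliser) cases cannot carry a rank-three string $C$-group with non-degenerate type, or correspond to the stabilisation of a point/line on the conic and hence (pulling back to $\PG(2,q)$) a stabilised point or line, contrary to hypothesis. Dihedral subgroups do not give rank-three non-degenerate string $C$-groups by~\cite{ConnorLeemans_Suzuki}, already cited above. This leaves exactly $\Sym(4)$, $\Alt(5)$, and the subfield subgroups $\PSL(2,q')\cong\POmega(3,q')$ and $\PGL(2,q')\cong\PSO(3,q')$; and one verifies each of these does admit such a generating triple (e.g.\ $\Sym(4)=[4,2,3]$, $\Alt(5)=[5,2,3]$, while the subfield subgroups arise as $[k,2,k']$ quotients with $k$ dividing $q'\pm 1$ and $2p$), giving the four listed possibilities. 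The isomorphisms $\PSL(2,q')\cong\POmega(3,q')$ and $\PGL(2,q')\cong\PSO(3,q')$ are classical and translate the statement back into the orthogonal-group language of the theorem.

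The main obstacle is making the reduction to $\PGL(2,q)$ watertight: one must check that the hypothesis ``$G$ does not stabilise a point or a line of $\PG(2,q)$'' translates exactly into ``$G$ does not fix a point of $\PG(1,q)\cong\mathcal{C}$ nor stabilise an unordered pair of points of $\mathcal{C}$,'' so that the Borel and dihedral subgroups are genuinely excluded, and conversely that $\Sym(4),\Alt(5),\PSL(2,q'),\PGL(2,q')$ acting on the conic really do not fix any point or line of the ambient plane. The second, more routine obstacle is pinning down for which $q$ and which Schl\"afli types $[k,2,k']$ the subfield groups $\POmega(3,q')$ and $\PSO(3,q')$ actually occur --- this is the arithmetic bookkeeping (order of $\rho_1\rho_2$ as an element of $\PGL(2,q)$ is a divisor of $q\pm 1$ or equals $p$) that is analogous to the divisibility conditions already extracted in the rank-two subsection, and I would state it precisely but not belabour the elementary field computations.
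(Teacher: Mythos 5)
Your proposal follows essentially the same route as the paper's proof: reduce to $\PGL(2,q)\cong\PSO(3,q)$ via the invariant conic guaranteed by~\cite{BV2010}, run through Dickson's list of subgroups of $\PGL(2,q)$, and discard the cyclic, Borel and dihedral cases using the hypothesis that no point or line of $\PG(2,q)$ is stabilised, leaving $\Sym(4)$, $\Alt(5)$, $\POmega(3,q')$ and $\PSO(3,q')$. The only step you leave implicit is the exclusion of $\Alt(4)$, which the paper dispatches by observing that $\Alt(4)$ is not generated by its involutions; otherwise the arguments coincide, the paper merely computing the invariant quadratic form explicitly from normalised matrices for $\rho_1,\rho_2,\rho_3$ rather than citing~\cite{BV2010} inside the proof.
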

\begin{proof}
Without loss of generality, we can suppose that $\rho_1 = \phi_1$, $\rho_3 = \phi_3$ and the axis of $\rho_2$ is $x_1-x_2+x_3=0$. With those assumptions,
\[ \rho_2 = \begin{pmatrix} a & -a-1 & a+1 \\ a+b & -a-b-1 & a+b \\
    b+1 & -b-1 & b \end{pmatrix} \]
where $a +1$, $a + b$ and $b+1$ are nonzero. Then the following quadratic form is preserved
\[ Q(x_1,x_2,x_3) = \frac{x_1^2}{a+1} - \frac{x_2^2}{a+b} + \frac{x_3^2}{b+1} \]

There is a well known isomorphism between $\PSO(3,q)$ and $\PGL(2,q)$ (see for instance~\cite[Page 125]{Wilson}).
Under this isomorphism, a subgroup which fixes a point in $\PGL(2,q)$ is sent to a subgroup which fixes a point in $\PSO(3,q)$. We are only interested in subgroups which do not fix a point or a line in $\PSO(3,q)$. Therefore the generated subgroup can only be isomorphic to one of $C_k$, $D_k$, $\Alt(4)$, $\Sym(4)$, $\Alt(5)$, $\PSL(2,q')$ or $\PGL(2,q')$ (with $k|q\pm1$ and $q'$ a root of $q$) since all the other subgroups of $\PGL(2,q)$ fix a point. Moreover, it cannot be $C_k$ nor $D_k$ since those subgroups fix a point in $\PSL(3,q)$. There remain only $\Alt(4)$, $\Sym(4)$, $\Alt(5)$, $\PSL(2,q')$ or $\PGL(2,q')$.

First, we examine the cases where the generated subgroup is an exceptional subgroup of $\PGL(2,q)$ (i.e. $\Alt(4)$, $\Sym(4)$ or $\Alt(5)$). It cannot be $\Alt(4)$ since $\Alt(4)$ is not generated by involutions. It can be $\Sym(4)$ only if $\rho_1\rho_2$ and $\rho_2\rho_3$ have orders $3$ or $4$. It can be $\Alt(5)$ only if $\rho_1\rho_2$ and $\rho_2\rho_3$ have orders $3$ or $5$.

The groups  $\POmega(3,q')$ and $\PSO(3,q')$ correspond to the two remaining cases $\PSL(2,q')$ and $\PGL(2,q')$ which cannot be eliminated.
This concludes the proof of the Theorem.
\end{proof}

Observe that the regular polytopes of rank three with automorphism group $\PSL(2,q)$ or $\PGL(2,q)$ have been classified in~\cite{CPS2008}.

Let us examine how the generated subgroup depends on $a$ and $b$.

Note that the characteristic polynomial of $\rho_1\rho_2$ is $(X-1)(X^2 - 2a + 1)$ while the polynomial of $\rho_2\rho_3$ is $(X-1)(X^2 - 2b + 1)$. Hence, the order of $\rho_1\rho_2$ can be $k$ only when $a = (r+r^{-1})/2$ for $r$ a $k$-root of unity. Therefore, $\rho_1\rho_2$ has order $3$ when $a=-1/2$. It has order $4$ when $a = 0$. It has order $5$, when $2a$ is a sum of two inverse fifth roots of unity. This happens when $4a^2 + 2a - 1 = 0$.

It is possible to check that when $(a,b) = (-1/2,-1/2)$, $(0,-1/2)$ or $(-1/2,0)$ we get the subgroup $\Sym(4)$. If $X$ is a square root of $5$ then, we get $\Alt(5)$ when $(a,b) = (-1/2,(-1+X)/4)$, $(-1/2,(-1-X)/4)$, $((-1+X)/4,(-1-X)/4)$ or the same with $a$ and $b$ exchanged.

\subsection{Regular polytope of rank $4$ for $q$ odd}\label{qodd}

We now determine which subgroups of $\PSL(3,q)$ with $q$ odd are string C-groups of rank four.
\begin{theorem}
Let $S:=\{\rho_1,\rho_2,\rho_3,\rho_4\}$ be a set of involutions of $\PSL(3,q)$ ($q$ odd) and let $G := \langle S \rangle$. If the pair $(G,S)$ is a string $C$-group of rank $4$, then $G$ is one of the following
    \begin{itemize}
        \item $D_{2p}^2$ which preserves two points,
        \item $He_p : C_2^2$ which preserves a point and a line (adjacent) or
        \item $\POmega(3,q')$ or $\PSO(3,q')$ which preserves a conic.
    \end{itemize}
\end{theorem}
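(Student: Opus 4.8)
The plan is to leverage the rank-three classification (Theorems~1 and~2 of Section~\ref{regular}) together with the recursive nature of the intersection property (Lemma~1 of Section~\ref{prelim}, i.e.~\cite[Proposition 2E16]{McMullenSchulte}). If $(G,\{\rho_1,\rho_2,\rho_3,\rho_4\})$ is a string $C$-group of rank four, then both $G_3 := \langle \rho_1,\rho_2,\rho_3\rangle$ and $G_0 := \langle \rho_2,\rho_3,\rho_4\rangle$ are string $C$-groups of rank three (allowing the degenerate possibility that a consecutive product has order $2$, in which case a factor collapses and one really has a rank-two or lower configuration, but the string property still constrains things), and moreover $G_3 \cap G_0 = \langle \rho_2,\rho_3\rangle$. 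So the first step is: run through the list of admissible rank-three groups from the two rank-three theorems as candidates for $G_3$ (equivalently, by duality, for $G_0$), namely $D_{4p}$, $D_{2k}$, $D_{2p}^2$, $D_{2p}\wr C_2$, $He_p:C_2^2$, $E_{q'}^2:D_{2k}$ on the "stabilises a point or line" side, and $\Sym(4)$, $\Alt(5)$, $\POmega(3,q')$, $\PSO(3,q')$ on the "preserves a conic" side.

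Next, for each candidate $G_3$, I would ask which involution $\rho_4$ (commuting with $\rho_1$ and $\rho_2$, by the string property) can be adjoined so that $G_0 = \langle\rho_2,\rho_3,\rho_4\rangle$ is again on the admissible list and $G_3\cap G_0 = \langle\rho_2,\rho_3\rangle$. Several candidates are eliminated immediately: any case where $\rho_1$ itself cannot legitimately sit inside a rank-three string $C$-group structure (the paper already observed, e.g., that in the "fifth case" the group is forced dihedral and $D_{4k}$ only supports degenerate rank-three structures, and that in some cases $\rho_1$ would lie in a parabolic $G_2$, violating $IP$), rules out the corresponding $G_3$. For the dihedral candidates $D_{4p}$, $D_{2k}$, and $D_{2p}\wr C_2$, I expect to use the known classification of string $C$-group representations of dihedral groups and wreath products (cited as~\cite{ConnorLeemans_Suzuki}) to show that adjoining a fourth involution either forces $G$ back into a strictly smaller group that already failed $IP$, or produces a Schläfli type with a $2$ forcing degeneracy that in turn kills $IP^+$. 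The surviving cases should be exactly $D_{2p}^2$ (preserving two points), $He_p:C_2^2$ (preserving a point and an adjacent line), and the orthogonal groups $\POmega(3,q')$, $\PSO(3,q')$ (preserving a conic), matching the statement.

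For the conic case specifically, I would argue as in Theorem~\ref{thm:5.2}: if $\rho_2$ (or $\rho_3$) does not stabilise an edge or vertex of the triangle fixed by $\langle\rho_1,\rho_3\rangle$, then by~\cite{BV2010} the whole group preserves a bilinear form, hence lies in $\PSO(3,q)\cong\PGL(2,q)$; then one invokes the subgroup structure of $\PGL(2,q)$ — a rank-four string $C$-group needs a rank-four string $C$-group quotient structure, and among subgroups of $\PGL(2,q)$ not fixing a point, only $\PSL(2,q')$ and $\PGL(2,q')$ (the exceptional subgroups $\Alt(4)$, $\Sym(4)$, $\Alt(5)$ have no string $C$-group structure of rank four, a standard fact) survive. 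Finally one checks that for $D_{2p}^2$ and $He_p:C_2^2$ explicit involution quadruples realising the rank-four string $C$-group do exist (the commented-out computation in Section~\ref{qodd} already carries these out), so the list is exactly right, with no further reductions.

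The main obstacle I anticipate is the bookkeeping in the "mixed" cases where $G_3$ stabilises a point or line but $G_0$ would need to preserve a conic (or vice versa): one must show these are incompatible, essentially because a group preserving a nondegenerate conic and simultaneously containing a large parabolic-type subgroup fixing a point is forced to be too small, and making that precise for every combination is the delicate part. A secondary nuisance is carefully handling degenerate Schläfli types (a $2$ among the consecutive products), where the rank-three "slices" are not honestly rank-three; here one falls back on the direct structure of the groups involved to check that no new possibilities arise.
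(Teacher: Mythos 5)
Your strategy --- recursing through the two facet groups $G_3=\langle\rho_1,\rho_2,\rho_3\rangle$ and $G_0=\langle\rho_2,\rho_3,\rho_4\rangle$ and matching them against the rank-three lists --- is genuinely different from what the paper does, and one part of it is attractive: once $G$ is known to lie in $\PSO(3,q)\cong\PGL(2,q)$, observing that $\Sym(4)$ and $\Alt(5)$ admit no rank-four string $C$-group representation disposes of the exceptional subgroups more cleanly than a characteristic-polynomial analysis. But as written there are two genuine gaps, the first of which you flag yourself without closing. First, the gluing problem: knowing that $G_3$ and $G_0$ each appear on the rank-three lists does not determine $G=\langle G_3,\rho_4\rangle$. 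In the conic case, \cite{BV2010} gives a bilinear form preserved by $G_3$ and another preserved by $G_0$, but nothing in your argument forces these to be the \emph{same} form, so you cannot yet conclude $G\leq\PSO(3,q)$; and in the mixed cases (one facet group stabilising a point or line, the other preserving a conic) you give no argument at all. Second, the elimination of the remaining rank-three candidates ($D_{4p}$, $D_{2k}$, $D_{2p}\wr C_2$, $E_{q'}^2:D_{2k}$, and a second occurrence of $He_p:C_2^2$) as possible facet groups is asserted (``I expect'', ``should be'') rather than proved. For instance, ruling out $G_3\cong E_{q'}^2:D_{2k}$ requires showing that any admissible $\rho_4$ forces $\rho_1\in\langle\rho_2,\rho_3,\rho_4\rangle$, which is a computation and not a consequence of the dihedral-group classification you cite.

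The paper sidesteps both difficulties by working bottom-up: since $\rho_1$ and $\rho_4$ commute and $q$ is odd, they are simultaneously diagonalisable, and the commutation of $\rho_3$ with $\rho_1$ and of $\rho_2$ with $\rho_4$ then forces explicit matrix shapes with parameters $a,b,c,a',b',c'$ subject to $1-a^2=bc$ and $1-a'^2=b'c'$. The classification then reduces to four cases according to which of $b,c,b',c'$ vanish, and in the generic case an explicit quadratic form invariant under \emph{all four} generators is exhibited, which is exactly what resolves the gluing problem; the rank-three classification is only invoked afterwards to identify $G_1$ and $G_4$ inside $\PGL(2,q)$. To salvage your top-down approach you would need to supply (i) a proof that the structures preserved by $G_3$ and $G_0$ coincide, or that the mixed cases are impossible, and (ii) explicit extension computations for each remaining rank-three candidate --- at which point you would essentially be reproducing the paper's parametrisation.
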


\begin{proof}
We can choose the coordinates so that $\rho_1$ and $\rho_4$ are diagonal matrices. Then, using the fact that $\rho_3$ commutes with $\rho_1$ and $\rho_2$ commutes with $\rho_4$, we find that
\[\rho_1 = \begin{pmatrix} 1 & 0 & 0 \\ 0 & -1 & 0 \\ 0 & 0 & -1 \end{pmatrix},\ 
\rho_2 = \begin{pmatrix} a & b & 0 \\ c & -a & 0 \\ 0 & 0 & -1 \end{pmatrix},\]\[
\rho_3 = \begin{pmatrix} -1 & 0 & 0 \\ 0 & -a' & c' \\ 0 & b' & a' \end{pmatrix},\ 
\rho_4 = \begin{pmatrix} -1 & 0 & 0 \\ 0 & -1 & 0 \\ 0 & 0 & 1 \end{pmatrix} \]
with $1 - a^2 = bc$ and $1 - a'^2 = b'c'$. We start by noticing some restrictions on the parameters appearing in the matrices. If $a = 0$ or $a = -1$, then $\left\langle \rho_1,\rho_2,\rho_4\right\rangle$ is not a string $C$-group. Hence, we can suppose that $a$ is not $0$ nor $-1$.  Then, $b=0$ or $c=0$ if and only if $a=1$. Moreover, $b$ and $c$ cannot be $0$ at the same time. We have a similar result for $a'$, $b'$ and $c'$. We distinguish four situations (depending on whether $b$, $b'$, $c$ and $c'$ are zero's) :
\begin{enumerate}
    \item if none is zero, we can re-scale the coordinates to get $b=-a-1$, $c = a-1$, $b' = -a'-1$ and $c' = a'-1$.
    \item if one is zero, we suppose $b=0$ and we can re-scale the coordinates to get $c=1$, $b'=-a'-1$ and $c'=a'-1$.
    \item if two are zero's, we suppose $b=b'=0$ and we can re-scale the coordinates to get $c=c'=1$.
    \item if two are zero's, we suppose $b=c'=0$ and we can re-scale the coordinates to get $b'=c=1$. 
\end{enumerate}

The third and the fourth cases yield respectively the polytopes $D_{2p}$ and $He_p : C_2^2$. In the second case, $G_1$ is the subgroup $E_{q'}^2 : D_{2k}$ and it contains $\rho_1$, which is impossible for a string $C$-group. It remains to consider the first case.

The subgroup $G$ generated by the involutions leaves the quadratic form
\[ Q(x_1,x_2,x_3) = (1-a)(1+a')x_1^2 + (1+a)(1+a')x_2^2 + (1+a)(1-a')x_3^2 \] invariant.
Therefore, $G$ is contained in $\PSO(3,q) \cong \PGL(2,q)$. We identify the parabolic subgroups $G_1$ and $G_4$ using the classification we obtained for rank $3$. They fall into the category of string $C$-groups which do not preserve any point nor any line. Hence, they can be of the form $\Sym(4)$, $\Alt(5)$, $\POmega(3,q'')$ or $\PSO(3,q'')$ by Theorem~\ref{thm:5.2}.

If either $G_1$ or $G_4$ is a group $\POmega(3,q'')$ or $\PSO(3,q'')$, then $G$ is also of the form $\POmega(3,q')$ or $\PSO(3,q')$ and those have be investigated in~\cite{ls07} and~\cite{ls09} respectively.

If both $G_1$ and $G_4$ are of the form $\Sym(4)$ or $\Alt(5)$, then we have restrictions on the orders of $\rho_1\rho_2$, $\rho_2\rho_3$ and $\rho_3\rho_4$. 
The characteristic polynomials of those three products are
\[ (X-1)(X^2 -2a X +1),\ (X-1)(X^2 + (-aa' +a + a' + 1)X + 1)\text{ and } (X-1)(X^2 - 2bX + 1). \]
We summarize the different possibilities for $a$ and $a'$ in Table~\ref{poss}. The table must be read as follow. At the left, one finds the values of $a$ for which $\rho_1\rho_2$ has order $3$ or $5$. At the right, one finds the values of $a'$ for which $\rho_3\rho_4$ has order $3$ or $5$. Given the value of $a$ and $a'$, one can compute $-aa'+a+a'+1$. Finally, we have some restrictions on $-aa'+a+a'+1$ for the product $\rho_2\rho_3$ to have order $3$, $4$ or $5$. For example, in the case $[3,3,3]$, we need to have $-1/4 = -aa'+a+a'+1 = 1$ which can only happen when $p=5$.
\end{proof}

\begin{table}
\begin{center}
\begin{tabular}{l|c|c|c|c|r}
    & $-aa'+a+a'+1$ & $[-,3,-]$ & $[-,4,-]$ & $[-,5,-]$ & \\ \hline
    $[3,-,-]$,
    & $\frac{-1}{4}$ & $p=5$ & No & $p=11$ & $a' = \frac{-1}{2},\ [-,-,3]$ \\
    $a = \frac{-1}{2}$& $\frac{1+3X}{8}$ & No & - & $p=59$ & $a' = \frac{-1+X}{4},\ [-,-,5]$ \\
    & $\frac{1-3X}{8}$ & No & - & $p=59$ & $a' = \frac{-1-X}{4},\ [-,-,5]$ \\ \hline
    $[5,-,-],$
    & $\frac{1+3X}{8}$ & No & - & $p=59$ & $a' = \frac{-1}{2},\ [-,-,3]$ \\
    $a = \frac{-1+X}{4}$& $\frac{1+5X}{8}$ & $p=19$ & - & $p=3,11$ & $a' = \frac{-1+X}{4},\ [-,-,5]$ \\
    & $\frac{3}{4}$ & No & - & $p=19$ & $a' = \frac{-1-X}{4},\ [-,-,5]$ \\ \hline
    $[5,-,-],$
    & $\frac{1-3X}{8}$ & No & - & $p=59$ & $a' = \frac{-1}{2},\ [-,-,3]$ \\
    $ a = \frac{-1-X}{4}$& $\frac{3}{4}$ & No & - & $p=19$ & $a' = \frac{-1+X}{4},\ [-,-,5]$ \\
    & $\frac{1-5X}{8}$ & $p=19$ & - & $p=3,11$ & $a' = \frac{-1-X}{4},\ [-,-,5]$ \\ \hline
\end{tabular}
\caption{String C-groups of rank 4 for subgroups of $\PSL(3,q)$ ($q$ odd)}\label{poss}
\end{center}
\end{table}
\subsection{Regular polytope for $q$ even : triangular subgroups}

We start by considering the case where the involutions fix a point and an adjacent line.

\begin{theorem}
    Let $\Gamma := (G,S)$  be a non-degenerate string $C$-group with $G\leq \PSL(3,q)$ ($q$ even). Suppose that $G$ fixes a point and an adjacent line. Then $G$ is one of the following.
    \[C_2, D_8, C_2^2 \wr C_2 \]
    Moreover, the corresponding string C-groups are respectively of rank $1$, $2$ and $3$ and have respective Schl\"afli types $[]$, $[4]$ and $[4,4]$.
\end{theorem}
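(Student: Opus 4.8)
The strategy rests on two facts special to even characteristic: every involution of $\PSL(3,q)$ is a transvection, i.e.\ an elation fixing a line (its \emph{axis}) pointwise and having a \emph{centre} lying on that axis; and the stabiliser of an incident point--line pair is a Borel subgroup whose unipotent radical is the Heisenberg group of order $q^3$ and exponent $4$. So the plan is to place $G$ inside this unipotent radical and exploit its very rigid structure.

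First I would fix coordinates so that $G$ fixes the point $P=(0:0:1)$ and the line $\ell:x_1=0$; since $P\in\ell$ the stabiliser is the group $B$ of lower triangular matrices, and I work inside $U$, the group of lower unitriangular matrices. An elation fixes $P$ exactly when $P$ lies on its axis, and fixes $\ell$ exactly when its centre lies on $\ell$; as $P\in\ell$, each generator $\rho_i$ is therefore an elation with axis $\ell$ (call this set $T_1$) or an elation with centre $P$ (the set $T_2$). A short matrix computation then records: $T_1$ and $T_2$ are elementary abelian $2$-groups isomorphic to $\FF_q^2$; $T_1\cap T_2=Z(U)$; $\langle T_1,T_2\rangle=U$; any two elements of $T_1$ commute, as do any two elements of $T_2$; an element of $T_1\setminus Z(U)$ and one of $T_2\setminus Z(U)$ never commute; and the product of two non-commuting involutions of $U$ has order exactly $4$.

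From non-degeneracy I then read off the combinatorial skeleton. Consecutive generators do not commute, so every entry of the Schl\"afli type equals $4$; no $\rho_i$ can lie in $Z(U)$, since such an element commutes with its neighbours; hence (for rank $\ge 2$) every generator is a \emph{proper} element of $T_1$ or of $T_2$, and since proper elements of the same $T_j$ commute, the types of $\rho_1,\rho_2,\dots$ must alternate $1,2,1,2,\dots$ along the index order. If the rank were $\ge 4$, then $\rho_1$ and $\rho_4$ would be proper elements of the two different $T_j$'s and so would fail to commute, contradicting the string property, which forces $\rho_1\rho_4=\rho_4\rho_1$. Hence the rank is at most $3$, with Schl\"afli types $[\,]$, $[4]$ and $[4,4]$ respectively.

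It remains to identify $G$. Rank $1$ gives $G\cong C_2$; in rank $2$, $\rho_1,\rho_2$ are proper of opposite types with $\rho_1\rho_2$ of order $4$, so $G\cong D_8$ and the intersection property is automatic. The substantive case is rank $3$: with $\rho_1,\rho_3\in T_1$ (lower-left entries $s_1,s_3$) and $\rho_2\in T_2$, I would first show that the intersection $\langle\rho_1,\rho_2\rangle\cap\langle\rho_2,\rho_3\rangle$ can exceed $\langle\rho_2\rangle$ only when the central involutions $(\rho_1\rho_2)^2$ and $(\rho_2\rho_3)^2$ of the two copies of $D_8$ coincide, so $IP$ holds if and only if $s_1\ne s_3$ (which in particular needs $q\ge 4$ to be realisable). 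Granting $s_1\ne s_3$, one computes $A:=G\cap T_1=\langle\rho_1,\rho_3,(\rho_1\rho_2)^2,(\rho_2\rho_3)^2\rangle$, an index-$2$ subgroup of $G$; these four involutions are $\FF_2$-independent inside $\FF_q^2$ (using $s_1,s_3\ne 0$ and $s_1\ne s_3$), so $A\cong C_2^4$, and conjugation by $\rho_2$ acts on $A$ as a double transposition of a suitable $\FF_2$-basis, whence $G=A\rtimes\langle\rho_2\rangle\cong C_2^2\wr C_2$. Finally, explicit triples in $U\le\PSL(3,q)$ realise each of the three possibilities. I expect this last rank-$3$ computation to be the main obstacle: pinning down $|A|=16$ exactly, and then recognising the extension $A\rtimes\langle\rho_2\rangle$ as the wreath product rather than some other group of order $32$.
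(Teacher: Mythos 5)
Your proposal is correct and follows essentially the same route as the paper: restrict to the unitriangular $2$-group fixing the flag, split its involutions into the two elementary abelian subgroups (axis $\ell$ versus centre $P$, meeting in the centre), use non-degeneracy to force the generators to alternate between them and to avoid $Z(U)$, and kill rank $\geq 4$ via the non-commuting pair $\rho_1,\rho_4$. Your rank-$3$ step is more explicit than the paper's --- the paper merely asserts the dichotomy between $C_2\times D_8$ (which fails $IP$) and $C_2^2\wr C_2$ --- but it is the same argument.
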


\begin{proof}
If $G$ fixes a point and an adjacent line, we have that $G$ is a subgroup of the $2$-group
\[\def\arraystretch{1}
\set{\begin{pmatrix}
1 & x & y \\
0  & 1 & z \\
0  & 0  & 1
\end{pmatrix} \middle| x,y,z \in \FF_q}. \]
Let $p$ and $l$ respectively be the preserved point and line. We start by constructing non-degenerate regular polytopes.

In rank $1$, the only regular polytope is the polytope of the group $C_2$. We can take any involution with axis $l$ or with center $p$.

In rank $2$, the $2$-group of unitriangular matrices contains only elements of order $2$ and $4$. The two involutions cannot have the same center nor can they have the same axis. Otherwise they would commute and give a degenerate polytope. Therefore, one of the involutions has center $p$ and axis different from $l$, and the other involution has center different from $p$ and axis $l$. One sees easily that they generate a dihedral subgroup of order $8$.

In rank $3$, we can suppose, using the duality, that $\rho_1$ has axis $l$. Then $\rho_2$ has center $p$ and $\rho_3$ has axis $l$. The axis of $\rho_2$ cannot be $l$. Similarly, the centers of $\rho_1$ and $\rho_3$ cannot be $p$. They generate the group $C_2 \times D_8$ or $C_2^2 \wr C_2$ depending on whether $\rho_1\rho_3$ commute with $\rho_2$ or not. Note however that $C_2 \times D_8$ does not verify the intersection property.

In rank $r\geq 4$, we can suppose that $\rho_1$ has axis $l$. Then for every odd $i\leq r$, the involution $\rho_{i}$ also has axis $l$ while for every even $j\leq r$, the involution $\rho_j$ has center $p$. Moreover, no involution can have at the same time axis $l$ and center $p$. Therefore, $\rho_1$ and $\rho_4$ cannot commute and we do not get a string $C$-group. Hence there is no such string C-group of rank at least four.

To summarize the above, we found three non-degenerate polytopes with groups $C_2$, $D_8$ and $C_2^2 \text{ wr }C_2$, and with Schl\"afli type $[]$, $[4]$ and $[4,4]$.
\end{proof}

\subsection{Regular polytope for $q$ even : other subgroups}\label{qeven}
We now take care of the case where there is no point and adjacent line which are fixed by $G$. First, we deal with rank $3$.
\begin{theorem}
   Let $S := \{\rho_1, \rho_2, \rho_3\}$ be a set of three involutions of $\PSL(3,q)$  ($q$ even). Let $G := \langle S \rangle$.
    Suppose $(G,S)$ is a string $C$-group of rank $3$. Suppose moreover that $G$ does not stabilise a point and an adjacent line of $\PG(2,q)$. Then $G$ is one of the following.
    \begin{itemize}
        \item $E_{q'}^2:D_{2k}$ where $q'$ is the smallest root of $q$ such that $k|q'\pm 1$;
        \item $\PSO(3,q')$ where $q' \mid q$.
    \end{itemize}
\end{theorem}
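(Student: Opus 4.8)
The plan is to run an argument parallel to the odd-characteristic rank-three theorems, exploiting the extra rigidity of involutions in characteristic two. First I would record the structural fact that when $q$ is even every involution $\rho$ of $\PSL(3,q)$ is an \emph{elation}: a lift of $\rho$ to $\mathrm{SL}(3,q)$ is unipotent with minimal polynomial dividing $(X-1)^2$, so $(\rho-\epsilon)$ has rank one, and hence $\rho$ has a well-defined centre $p_\rho\in\PG(2,q)$ and axis $\ell_\rho\ni p_\rho$, fixing $\ell_\rho$ pointwise. Then I would analyse $G_2:=\langle\rho_1,\rho_3\rangle$. The string property forces $\rho_1$ and $\rho_3$ to commute, and $IP$ forces them to be distinct, so $G_2\cong C_2^2$; and two distinct commuting elations must share a centre or an axis (if both centre and axis differ, then $\ell_1\cap\ell_3$ is a single point which is forced to equal both $p_1$ and $p_3$, a contradiction). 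Using the point–line duality of $\PG(2,q)$ (the correlation already used in the proofs above) I may assume $\rho_1$ and $\rho_3$ have a common centre $p$, and fix homogeneous coordinates adapted to this configuration (for instance $p=(0:0:1)$ and the axes among the coordinate lines), keeping the sub-case in which $\rho_1$ and $\rho_3$ also share an axis as one of the configurations treated below.

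Next I would carry out the case analysis on the elation $\rho_2$, in the spirit of the eight/nine cases of the odd rank-three theorem but shorter. Up to conjugation by the stabiliser of the $G_2$-configuration, $\rho_2$ is determined by the incidence position of its centre $p_2$ and axis $\ell_2$ relative to $p$, $\ell_1$ and $\ell_3$; in characteristic two there are no harmonic conjugates to distinguish, so fewer cases occur than in odd characteristic. Each resulting matrix shape is parametrised by one or two field parameters, subject to the non-degeneracy conditions that $\rho_1\rho_2$ and $\rho_2\rho_3$ have order at least three. For most shapes the generated group $G$ is computed by the same elementary bookkeeping as before and turns out to stabilise a point or a line; such a $G$ is discarded when it stabilises a point together with an incident line (the hypothesis we exclude), and otherwise the computation of the invariant subspace, together with the argument ruling out automorphisms sending $\rho_1\rho_2$ and $\rho_2\rho_3$ to their inverses inside a fixed flag, identifies $G$ as $E_{q'}^2:D_{2k}$ with $q'$ the smallest root of $q$ such that $k\mid q'\pm 1$ — this is the characteristic-two analogue of the ``eighth case'' computation that produced the normal elementary abelian subgroup $E_{q'}^2$ together with a dihedral complement.

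The shapes that remain are exactly those for which $G$ stabilises no point and no line of $\PG(2,q)$. Here I would invoke the subgroup structure of $\PSL(3,q)$ for $q$ even (the characteristic-two analogue of the result~\cite{BV2010} used above, equivalently Hartley's classification): an irreducible subgroup of $\PSL(3,q)$ generated by elations is, up to conjugacy, either contained in $\PSO(3,q)$ — that is, preserves a conic — or is $\PSL(3,q')$ or $\PSU(3,q')$ for some root $q'$ of $q$. The last two are impossible, since $\PSL(3,q')$ and $\PSU(3,q')$ are among the exceptions of Nuzhin and Mazurov recalled in the introduction: they are not generated by three involutions two of which commute, hence are never rank-three string $C$-groups. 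In the conic case $G\leq\PSO(3,q')\cong\PGL(2,q')=\PSL(2,q')$ (these coincide in characteristic two) for some $q'\mid q$; combining the classification of subgroups of $\PGL(2,q')$ with the known classification of rank-three string $C$-groups for $\PSL(2,q')$ and $\PGL(2,q')$ (see~\cite{CPS2008}), every such subgroup other than $\PSO(3,q')$ itself stabilises a point or a line of $\PG(2,q)$ and has already been accounted for. This leaves precisely $E_{q'}^2:D_{2k}$ and $\PSO(3,q')$, as claimed.

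I expect the main obstacle to be the second paragraph: enumerating the incidence configurations for $\rho_2$ in characteristic two completely and correctly — making sure no configuration is missed, that cases which were distinct in odd characteristic and now merge are handled once, and that the small-$q$ coincidences and the collapses $\PGL=\PSL$, $\PSO=\POmega$ are taken into account — and then, for each configuration, pinning down the generated group and in particular matching the subfield bookkeeping ($q'$ the smallest root with $k\mid q'\pm 1$, respectively $q'\mid q$) to the statement. By contrast, the treatment of the irreducible case is comparatively soft once the subgroup structure of $\PSL(3,q)$ and the non-string-$C$-group property of $\PSL(3,q')$ and $\PSU(3,q')$ are in hand.
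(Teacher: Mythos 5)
Your opening moves match the paper's: involutions of $\PSL(3,q)$ in even characteristic are elations, the commuting pair $\rho_1,\rho_3$ must share a centre or an axis, and one reduces by duality before a case analysis on $\rho_2$. But the way you split the cases afterwards contains a genuine gap. Once $\rho_1$ and $\rho_3$ share an axis $\ell$ (say), the point $p=\ell\cap\ell_2$ lies on the axis of every generator, so all three elations fix $p$ and $G$ is \emph{always} reducible; dually, if they share a centre, $G$ always fixes a line. Hence your third paragraph --- deriving the $\PSO(3,q')$ alternative from a classification of irreducible subgroups generated by elations --- treats a case that never occurs. Worse, in characteristic two a nondegenerate conic has a nucleus which its stabiliser must fix, so $\PSO(3,q')\cong\PSL(2,q')$ itself stabilises a point (with no invariant incident line). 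These groups therefore sit inside the very bucket (``stabilises a point or a line but not a flag'') that your second paragraph claims always yields $E_{q'}^2:D_{2k}$. As structured, the argument would either conclude, wrongly, that only $E_{q'}^2:D_{2k}$ occurs, or reach the correct list for the wrong reason.

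The paper's proof works entirely inside the point-stabilising situation: with the three generators written as explicit elations all fixing $p=(0,1,0)$, the dichotomy is on the field parameters ($b=0$ or $a=b$, versus $b\neq 0,a$). In the first branch one generates the elementary abelian normal subgroup $E_{q'}^2$ over $\FF_2[a,b]$ and obtains $E_{q'}^2:D_{2k}$; in the second branch one exhibits an explicit invariant quadratic form whose nucleus is $p$, so that $G$ lands in $\PSO(3,q)\cong\PSL(2,q)$ and then in $\PSL(2,q')$ by the subfield analysis. The invariant conic must therefore be found by direct computation within the reducible case, not deduced from irreducibility. (A smaller point: excluding $\PSL(3,q')$ and $\PSU(3,q')$ via the Nuzhin--Mazurov list would in any event need a caveat for $\PSL(3,2)\cong\PSL(2,7)$, which does admit rank-three string $C$-group representations --- but since the irreducible case is vacuous this never arises.)
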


Observe that since here $q$ is even, $\PSO(3,q') \cong \POmega(3,q') \cong \PSL(2,q')$.
\begin{proof}
We can assume $\rho_1$ and $\rho_3$ have the same axis. Moreover, the axis of $\rho_2$ will intersect their common axis in one point $p$. Therefore, the point $p$ is fixed and no line through $p$ can be fixed. The point $p$ cannot be the center of $\rho_2$. Otherwise, the axis of $\rho_1$ would be fixed. Similarly, the two centers of $\rho_1$ and $\rho_3$ cannot both be the point $p$. Therefore, we can assume without loss of generality that the center of $\rho_1$ is not $p$. Depending on the position of the center of $\rho_3$, there are three possible situations which are depicted below. Points and lines correspond respectively to the centers and the axes of the involutions.\\

\begin{tikzpicture}[xscale=0.81]
    \draw (-1,0) -- (3,0) node[anchor=south east] {$1,3$} ;
    \draw (0,-1) -- (0,1.5) node[anchor=north west] {$2$} ;
    \draw[fill] (0,0) circle (0.1) ++(225:0.1) node[anchor=north east]{$3$} ;
    \draw[fill] (1.5,0) circle (0.1) ++(270:0.1) node[anchor=north]{$1$} ;
    \draw[fill] (0,0.75) circle (0.1) ++(180:0.1) node[anchor=east]{$2$} ;
    
    \begin{scope}[xshift=150]
    \draw (-1,0) -- (3,0) node[anchor=south east] {$1,3$} ;
    \draw (0,-1) -- (0,1.5) node[anchor=north west] {$2$} ;
    \draw[fill] (0.75,0) circle (0.1) ++(270:0.1) node[anchor=north]{$3$} ;
    \draw[fill] (1.5,0) circle (0.1) ++(270:0.1) node[anchor=north]{$1$} ;
    \draw[fill] (0,0.75) circle (0.1) ++(180:0.1) node[anchor=east]{$2$} ;
    \end{scope}
    
    \begin{scope}[xshift=300]
    \draw (-1,0) -- (3,0) node[anchor=south east] {$1,3$} ;
    \draw (0,-1) -- (0,1.5) node[anchor=north west] {$2$} ;
    \draw[fill] (1.5,0) circle (0.1) ++(270:0.1) node[anchor=north]{$1,3$} ;
    \draw[fill] (0,0.75) circle (0.1) ++(180:0.1) node[anchor=east]{$2$} ;
    \end{scope}
\end{tikzpicture}

We let the axis of $\rho_1$ be the line $x_3=0$ and we let the axis of $\rho_2$ be the line $x_1=0$. Finally, we let the line passing through the centers of $\rho_1$ and $\rho_2$ be $x_2=0$. We can then assume that
\[ \def\arraystretch{1}
\rho_1 = \begin{pmatrix} 
    1 & 0 & a \\ 0 & 1 & 0 \\ 0 & 0 & 1
\end{pmatrix},\ 
\rho_2 = \begin{pmatrix} 
    1 & 0 & 0 \\ 0 & 1 & 0 \\ 1 & 0 & 1
\end{pmatrix}
\text{ and }
\rho_3 = \begin{pmatrix} 
    1 & 0 & b \\ 0 & 1 & c \\ 0 & 0 & 1
\end{pmatrix}\]
Note that the subgroup of matrices
\[ \def\arraystretch{1}
M(x,y) = \begin{pmatrix}
    1 & 0 & 0 \\
    x & 1 & y \\
    0 & 0 & 1 \\
\end{pmatrix}.\]
is invariant under conjugation by elements of $G$. Indeed, we have
\[ M(x,y)^{\rho_1} = M(x,ax+y),\ 
M(x,y)^{\rho_2} = M(x+y,y) \text{ and }
M(x,y)^{\rho_3} = M(x,bx+y)\]
Using similar argument to those used with the odd characteristic, we see that, by using only products and conjugation, we can get from an element $M(x,y)$ all the elements of the form
\[ M(kx + k'y,l'x + ly) \text{ with } k,k',l,l' \in \FF_{q'} = \FF_2\left[ a,b \right]. \]
If $b=0$, then $\rho_3 = M(0,c)$ and we get the group $E_{q'}^2:D_{2k}$. If $a=b$, then $\rho_1\rho_3 = M(0,c)$ and we have the same conclusion. We can now suppose that $b$ is different from $0$ and $a$. We will see that, in those cases, we get the group $\PSL(2,q')$. When $c=0$, the line $x_2=0$ is fixed. When $c \not=0$, the following quadratic form is invariant under the action of $G$.
\[ Q(x,y,z) = \frac{x^2 + axz + az^2}{(a+b)b} + \frac{y^2}{c^2} \]
Independently of $c$, we find that the generated subgroup is contained in $\PSL(2,q)$. Finally, a careful but not difficult analysis of the subgroups of $\PSL(2,q)$ shows that the generated subgroup is $\PSL(2,q')$ where $q'$ is a root of $q$.
\end{proof}

The higher the rank, the more commutation constraints we have. In odd characteristic, this prevents from constructing a polytope of rank $5$ or higher. In even characteristic, it is possible to have a lot of involutions commuting together. Indeed, $\PSL(3,2^n)$ contains subgroups $C_2^{k}$ which forms a polytope of rank $k$ for $k \leq 2n$. However, we will see that, from rank $5$ onward, the only possible polytope for a subgroup of $\PSL(3,q)$ arise from triangular subgroups.

\begin{theorem}
Let $S:=\{\rho_1, \rho_2,\cdots,\rho_d\}$ be a set of involutions of $\PSL(3,q)$ ($q$ even). Let $G := \langle S \rangle$.
    Suppose that $(G,S)$ is a string $C$-group of rank $d\geq 4$.
    Suppose moreover that $G$ does not fix a point and an adjacent line. Then $d=4$, $(G,S)$ has Schl\"afli type $[4,k,4]$, $G_1 \cong G_4 \cong E_{q'}^2 : D_{2k}$ and $G \cong E_{q'}^4 : D_{2k}$.
\end{theorem}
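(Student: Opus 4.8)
The plan is to run the classification of lower-rank string $C$-subgroups of $\PSL(3,q)$ ($q$ even) already obtained, applied to the rank-$(d-1)$ residues of $(G,S)$, together with an induction on $d$. Write $G_1:=\langle\rho_2,\dots,\rho_d\rangle$ and $G_d:=\langle\rho_1,\dots,\rho_{d-1}\rangle$; these inherit the string property and the intersection property, so they are string $C$-groups of rank $d-1$, and the crucial dichotomy is the following: either such a residue fixes a point and an adjacent line of $\PG(2,q)$, in which case the theorem on triangular subgroups forces it to be one of $C_2$, $D_8$, $C_2^2\wr C_2$, hence of rank at most $3$; or it does not, in which case it is $E_{q'}^2:D_{2k}$ or $\PSO(3,q')\cong\PSL(2,q')$ if its rank is $3$, and (by the present theorem, available by induction) of the form $E^4:D$ of rank exactly $4$ if its rank is at least $4$.

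First I bound the rank. If $d\geq 5$, then $G_1$ has rank $d-1\geq 4$ and cannot fix a point and an adjacent line (that would force rank $\leq 3$), so $G_1$ is of the form $E^4:D$ and has rank $4$; hence $d=5$. In that case both $G_1$ and $G_5$ are of the form $E^4:D$, their Schl\"afli types overlap in the middle two entries, and matching these identifies the global type and forces $G_1$ and $G_5$ to be $2$-groups of a very restricted shape. A contradiction then follows from the structure of $2$-subgroups of $\PSL(3,q)$: each of $G_1$ and $G_5$ lies in the stabiliser of the point fixed by its normal subgroup of translations, and tracking these points together with the common residue $G_1\cap G_5=\langle\rho_2,\rho_3,\rho_4\rangle$ shows that $G=\langle G_1,G_5\rangle$ itself fixes a point and an adjacent line, against the hypothesis. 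I expect this elimination of rank $5$ to be the most delicate part of the argument.

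It remains to treat $d=4$. Now $G_1=\langle\rho_2,\rho_3,\rho_4\rangle$ and $G_4=\langle\rho_1,\rho_2,\rho_3\rangle$ have rank exactly $3$, so by the dichotomy each is $C_2^2\wr C_2$, $E_{q'}^2:D_{2k}$, or $\PSO(3,q')\cong\PSL(2,q')$. I rule out the last case. If, say, $G_4\cong\PSL(2,q')$, then since $q'$ is even all of $\rho_1,\rho_2,\rho_3$ are unipotent transvections of $\PSL(3,q)$; the commuting pair $\rho_1,\rho_3$ lies in a common elementary-abelian Sylow $2$-subgroup of $G_4$, so the two transvections share a centre and an axis, and $\langle\rho_2,\rho_3\rangle$ is a dihedral group $D_{2m}$ whose rotation order $m=|\rho_2\rho_3|$ is necessarily odd. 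A short argument --- a non-abelian $D_{2m}$ with $m$ odd cannot fix two distinct points of $\PG(2,q)$ --- then forces the point fixed by $G_4$ to coincide with the point fixed by $G_1$, so $G$ fixes a point $p$ and hence sits inside the maximal parabolic $\mathrm{Stab}(p)$. One checks that $G$ must either fix a line through $p$, contradicting the hypothesis, or preserve a conic, whence $G\cong\PSO(3,q'')$; but $\PSL(2,q'')$ with $q''$ even admits no string $C$-group of rank $\geq4$ at all, because its involutions are all unipotent and the relations $\rho_4\!\leftrightarrow\!\rho_1$, $\rho_4\!\leftrightarrow\!\rho_2$, $\rho_1\!\leftrightarrow\!\rho_3$ already force $\rho_1,\dots,\rho_4$ into a single elementary-abelian Sylow subgroup, so $G$ would be abelian. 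Hence $G_1,G_4\in\{C_2^2\wr C_2,\ E_{q'}^2:D_{2k}\}$.

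Finally, with the facet and vertex-figure pinned to those two possibilities, comparing their Schl\"afli types against the rank-three classification identifies the global type as $[4,k,4]$ and shows $G_1\cong G_4\cong E_{q'}^2:D_{2k}$. To conclude, I put $\rho_1,\rho_2,\rho_3,\rho_4$ into the normal form dictated by the commutations $\rho_1\!\leftrightarrow\!\rho_3$, $\rho_1\!\leftrightarrow\!\rho_4$, $\rho_2\!\leftrightarrow\!\rho_4$ together with the known shapes of $G_1$ and $G_4$, and compute directly --- much as in the rank-three even case --- that the products and conjugates of the generators generate a common normal elementary-abelian subgroup $E_{q'}^4$ (with $q'$ the subfield of $\FF_q$ generated by the matrix entries that occur) on which $D_{2k}$ acts, so that $G\cong E_{q'}^4:D_{2k}$; a routine verification of the intersection property, and the observation that $G$ cannot fix a point and an adjacent line (else it would be triangular, hence of rank $\leq3$), complete the proof. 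The two places I expect to demand real work are the elimination of rank $5$ and the exclusion of $\PSO(3,q')$-type residues, both of which rest on the parabolic and orthogonal subgroup structure of $\PSL(3,q)$.
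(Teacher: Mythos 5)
Your strategy is genuinely different from the paper's. The paper never inducts on rank or invokes the rank-$3$ classification here: it observes that in even characteristic every involution of $\PSL(3,q)$ is a transvection, that the odd-indexed generators pairwise commute (as do the even-indexed ones) and hence share a common axis or a common centre, and then runs a four-case analysis on the resulting configuration of axes and centres. Three of the four cases produce a fixed incident point--line pair (or kill commutativity of $\rho_1$ and $\rho_4$) for every $d\geq 4$ except the one surviving rank-$4$ configuration, which is then computed explicitly. That argument is short, uniform in $d$, and disposes of $d\geq 5$ essentially for free, whereas your route concentrates all the difficulty into the two steps you yourself flag as delicate.

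Those two steps are where the genuine gaps lie. First, the elimination of $d=5$ is not actually carried out: you describe a plan (track the fixed points of the normal translation subgroups of $G_1$ and $G_5$ and show $G$ fixes a flag) but give no argument that the two fixed points coincide, nor that a fixed \emph{line} adjacent to that point emerges. Note that there is a much shorter contradiction available from your own setup: if $G_1\cong G_5\cong E_{q'}^4:D_{2k}$ with type $[4,k,4]$, overlapping the types forces $k=4$, while $k$ must divide $q'\pm 1$ with $q'$ even, hence $k$ is odd --- so $d=5$ dies immediately, and your ``most delicate part'' can be replaced by a parity check. Second, your exclusion of $\PSO(3,q')$-type residues rests on two unproved assertions: that a $D_{2m}$ with $m$ odd cannot fix two points of $\PG(2,q)$ (and that this forces the nuclei/fixed points of $G_1$ and $G_4$ to coincide), and that a subgroup of the stabiliser of $p$ fixing no line through $p$ must preserve a conic; neither is obvious and the second is false for arbitrary subgroups, so it needs the specific generating data. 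Finally, ``comparing Schl\"afli types'' does not by itself exclude the case $G_1\cong G_4\cong C_2^2\wr C_2$ (global type $[4,4,4]$ is internally consistent); you need the extra observation that the common $D_8=\langle\rho_2,\rho_3\rangle$ fixes a unique flag, forcing both residues and hence $G$ to fix it. Each of these holes is fillable, but as written the proposal does not yet constitute a proof.
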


\begin{proof}
The involutions of odd index, $\rho_1$, $\rho_3$, $\rho_5$, ... commute. Hence, they have either an axis in common or a center in common. The same holds for the involutions of even index, $\rho_2$, $\rho_4$, $\rho_6$, ... Using the duality, we can assume that the involutions of odd index have the same axis. We call it the ``odd axis". Then, there are four possibilities. 

1) the involutions of even index have the odd axis as their axis. Then all the involutions commute and we have the triangular subgroup $C_2^k$.

2) the involutions of even index have a common axis different from the odd axis. We call this axis the even axis. The two axes intersect at a point $p$.  We know that $\rho_i$ and $\rho_{i+3}$ commute. However, they have different axes. Therefore, they need to have the same center. As $p$ is the only point contained in both axis, $p$ is necessarily the center of $\rho_i$ and $\rho_{i+3}$. From rank $6$ and higher, this implies that all of the involutions have the same center $p$. Therefore, they all commute. For the rank $5$, all of the involutions except $\rho_3$ have the same center and we get a triangular subgroup. For the rank $4$, the point $p$ is the center of $\rho_1$ and $\rho_4$. It cannot be the center of $\rho_2$ or $\rho_3$ for  otherwise, we would get a triangular subgroup. Therefore, we are in the following situation where the numbers are the indices of the involutions.
\begin{center}\begin{tikzpicture}
    \draw (45:-1) -- (45:3) node[anchor=north west] {$1,3$} ;
    \draw (135:-1) -- (135:3) node[anchor=north east] {$2,4$} ;
    \draw[fill] (0,0) circle (0.1) ++(0:0.1) node[anchor=west]{$1,4$} ;
    \draw[fill] (45:1.5) circle (0.1) ++(0:0.1) node[anchor=west]{$3$} ;
    \draw[fill] (135:1.5) circle (0.1) ++(0:0.1) node[anchor=west]{$2$} ;
\end{tikzpicture}\end{center}
Due to our previous analysis in rank $3$, we know that $G_1$ and $G_4$ are isomorphic to $E_{q'}^2 : D_{2k}$. For the intersection property to be verified, one has great restrictions on the involutions. Let us make this clearer. If we let
\[ \def\arraystretch{1}
\rho_1 = \begin{pmatrix} 1 & 0 & 0 \\ 0 & 1 & 1 \\ 0 & 0 & 1 \end{pmatrix},
\rho_2 = \begin{pmatrix} 1 & 0 & 0 \\ 0 & 1 & 0 \\ a & 0 & 1 \end{pmatrix},
\rho_3 = \begin{pmatrix} 1 & 0 & b \\ 0 & 1 & 0 \\ 0 & 0 & 1 \end{pmatrix},
\rho_4 = \begin{pmatrix} 1 & 0 & 0 \\ 1 & 1 & 0 \\ 0 & 0 & 1 \end{pmatrix} \]
Then, $G_1$ contains matrices $M(x,ay)$ and $G_4$ contains matrices $M(bx,y)$ for $x,y \in \FF_{q'} = \FF_2[ab]$. Therefore, the intersection of $G_1$ and $G_4$ is $G_{14} \cong D_{2k}$ only when $a \notin \FF_2[ab]$ or equivalently $b \notin \FF_2[ab]$. In this case, we are able to form all the matrices $M(x+bx',y+ay')$ with $x,x',y,y' \in \FF_{q'}$. This yields a normal subgroup $E_{q'}^4$.

3) the involutions of even index have the same center which belong to the odd axis. In this case, this center and the odd axis are preserved by the involutions, a contradiction with our assumption that no point is fixed.

4) the involutions of even index have the same center which does not belong to the odd axis. We call this center the even center. Then $\rho_1$ and $\rho_4$ are supposed to commute, but this is impossible as they have different axes and centers. Indeed, the axis of $\rho_1$ contains the even center but the axis of $\rho_4$ does not. Similarly, the center of $\rho_1$ is contained in the odd axis, but the center of $\rho_4$ is not. Therefore, $d<4$, a contradiction.
\end{proof}

\subsection{Summary}

Tables~\ref{summaryeven} and~\ref{summaryodd} summarize the regular non-degenerate string C-groups $(G,S)$ with $G\leq \PSL(3,q)$ that we found in the previous subsections. 
Note that $\Sym(4)$ appears in the case $p=2$ in the form $E_2^2 : D_6$. Similarly, when $n$ is even, $\Alt(5)$ appears as $\PSL(2,4)$. Among those subgroups, some do non give directly regular polytopes. For example, $\Alt(5)$ and $\PSL(2,q')$ are not directly regular. 
These cannot be considered when trying to construct chiral polytopes of rank six by Lemma~\ref{lem:2.3}.
In Table~\ref{summaryodd}, $He_p$ is the Heisenberg group of order $p^3$ and $q'$ is the smallest root of $q$ such that $k|q' \pm 1$. 

\begin{table}
\begin{tabular}{|c|c|}
\hline
  Schl\"afli type  & $G$ \\
    \hline
    \hline
   $[4]$ & $D_8$ \\
   $[k]$ with $k|q-1$ & dihedral subgroups of $D_{2(q-1)}$ \\
   $[k]$ with $k|q+1$ & dihedral subgroups of $D_{2(q+1)}$ \\
    \hline
    $[k,4]$ and $[k,k]$ & $E_{q'}^2 : D_{2k}$ \\
   All of rank 3 (see~\cite{CPS2008, ls07})  & $\PSL(2,q')$\\
    \hline
    $[4,k,4]$ & $E_{q'}^4 : D_{2k}$ \\
    \hline
\end{tabular}
\caption{The non-degenerate string C-groups $(G,S)$ with $G\leq \PSL(3,q)$ ($q$ even).}\label{summaryeven}
\end{table}

\begin{table}
\begin{tabular}{|c|c|}
\hline
  Schl\"afli type &$G$\\
    \hline
    \hline
   $[k]$ with $k|2p$&
   dihedral subgroups of $D_{4p}$ 
   \\
   $[k]$ with $k|q-1$
&dihedral subgroups of $D_{2(q-1)}$ 
\\
    $[k]$ with $k|q+1$ &
    dihedral subgroups of $D_{2(q+1)}$ 
    \\
    \hline
    $[2p,p]$ &
    $D_{2p}^2$ 
    \\
    $[2p,4]$ and $[4,4]$ &
    $D_{2p}\wr C_2$
    \\
    $[2p,p]$ and $[2p,2p]$ &
    $He_p : C_2^2$
    \\
    $[k,4]$ and $[k,k]$     &
    $E_{q'}^2 : D_{2k}$
    \\
    $[3,3]$ and $[3,4]$&$\Sym(4)$\\
    $[5,3]$ and $[5,5]$&$\Alt(5)$\\
    All  of rank 3 (see~\cite{CPS2008})& $\PSL(2,q') (:2)$  \\
    \hline
    $[p,2p,p]$ &
    $He_p : C_2^2$  
    \\
    $[3,3,3]$ & $\PGL(2,5)$\\
    $[3,5,3]$ & $\PSL(2,11)$\\
    $[5,3,5]$ & $\PSL(2,19)$\\ \hline
    \hline
\end{tabular}
\caption{The non-degenerate string C-groups $(G,S)$ with $G\leq \PSL(3,q)$ ($q$ odd).}\label{summaryodd}
\end{table}

\section{Chiral polytope of rank at least $6$}\label{srank6}

Now that we have classified all string C-groups of rank at least four for subgroups of $\PSL(3,q)$, we can use this classification and Lemma~\ref{lem:2.3} to prove Theorem~\ref{rank6}.

We begin by stating a general lemma which will permit to considerably reduce the number of cases to consider.
\begin{lemma}\label{notwo}
    Let $G$ be the group of automorphisms of a chiral polytope such that one of the generators of the chiral polytope has order $2$. Then $G$ has a normal subgroup $H$ of index $2$ and $G\cong H :C_2$.
\end{lemma}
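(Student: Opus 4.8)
The plan is to exploit the chirality condition directly. Suppose $(G,(\sigma_1,\dots,\sigma_{r-1}))$ is a $C^+$-group and that one generator has order $2$; by taking the dual $C^+$-group if necessary (which swaps $\sigma_1 \leftrightarrow \sigma_{r-1}^{-1}$), I may assume $\sigma_1^2 = \epsilon$, i.e.\ $\sigma_1 = \sigma_1^{-1}$. Now the third defining condition of a chiral polytope says that there is \emph{no} automorphism of $G$ sending $\sigma_1 \mapsto \sigma_1^{-1}$ and fixing each $\tau_{1,i}$ for $i=2,\dots,r-1$. But since $\sigma_1 = \sigma_1^{-1}$, the \emph{identity} automorphism already does this — unless the requirement is really about an automorphism that is forbidden for some independent structural reason. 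The correct reading (as in \cite{SchulteWeiss, FLW}) is that chirality fails precisely when such an automorphism exists; here I must instead extract the consequence that the full automorphism group of the \emph{polytope} is twice $G$, equivalently that $G$ is an index-two subgroup of the automorphism group of the associated directly regular polytope. So the first step is to recall, via the paragraph after the $C^+$-group definition in Section~\ref{prelim}, that if the last (chirality) property fails then $G$ together with the offending automorphism is the automorphism group of a directly regular polytope with $G$ as rotation subgroup, hence of index $2$.

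The key step is then to show that when $\sigma_1$ is an involution, this offending automorphism \emph{must} exist. Concretely, I would argue that $\sigma_1 = \sigma_1^{-1}$ makes the map "$\sigma_1 \mapsto \sigma_1^{-1}$, $\tau_{1,i}\mapsto\tau_{1,i}$" into the identity on the generating set $\{\sigma_1,\tau_{1,2},\dots,\tau_{1,r-1}\}$ of $G$ (note $\sigma_j = \tau_{1,j-1}^{-1}\tau_{1,j}$, and $\sigma_1,\tau_{1,2},\dots,\tau_{1,r-1}$ do generate $G$), so it extends to the identity automorphism of $G$, which trivially fixes everything required. Hence the third condition in the definition of a chiral polytope is violated, so $(G,(\sigma_1,\dots,\sigma_{r-1}))$ does not give a chiral polytope but a directly regular one — and we are told $G$ is then the rotation subgroup, of index $2$ in the full automorphism group $\widehat G$.

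Wait — that would say no chiral polytope has an involution among its generators, which is false (e.g.\ the rank-four examples in Theorem~\ref{rank4} have $\sigma_1\sigma_2$ an involution but that is a $\tau$, not a generator; one checks the $\sigma_i$ there are not involutions). So the lemma's hypothesis is genuinely restrictive, and the above shows it is even \emph{vacuous} unless we reinterpret: the intended statement is surely that when such an involutory generator exists the polytope collapses in the above sense, and the lemma records the group-theoretic shadow of that. The cleanest correct route: set $H := \langle \sigma_2,\sigma_3,\dots,\sigma_{r-1}\rangle$ (or more precisely the subgroup generated by all $\tau_{i,j}$ with $i\geq 2$, together with suitable elements), show $[G:H]=2$ using that $\sigma_1^2=\epsilon$ and the coset structure $G = H \cup \sigma_1 H$, and that $H \trianglelefteq G$ because conjugation by $\sigma_1$ (an involution normalising the generating relations) preserves $H$ — then $G = H : C_2$ via the complement $\langle\sigma_1\rangle$, provided $\sigma_1 \notin H$, which follows from $H$ having index $2$.

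The main obstacle I anticipate is pinning down \emph{which} normal subgroup $H$ to use and proving $[G:H] = 2$ rather than $1$: one needs that $\sigma_1 \notin \langle \tau_{i,j} : i\geq 2\rangle$, and this should come from the intersection property $IP^+$ (applied to the section that discards the first node) combined with the chirality/directly-regular dichotomy. I would handle this by invoking Lemma~\ref{lem:2.3} and the intersection property to identify $H$ with the "odd-index" type subgroup and then counting cosets via the involution $\sigma_1$. Once $H$ is normal of index $2$ with $\sigma_1\notin H$, the semidirect decomposition $G \cong H : C_2$ is immediate from $G = H\langle\sigma_1\rangle$ and $H \cap \langle\sigma_1\rangle = \{\epsilon\}$.
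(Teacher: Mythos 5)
Your proposal does not reach a proof, and the main problem is the very first reduction. Duality only exchanges $\sigma_1$ with $\sigma_{r-1}^{-1}$ (and in general $\sigma_i$ with $\sigma_{r-i}^{-1}$), so you may \emph{not} assume the involutory generator is $\sigma_1$. Worse, as you yourself half-observe, if $\sigma_1$ (or dually $\sigma_{r-1}$) had order $2$ then the identity automorphism would already send $\sigma_1$ to $\sigma_1^{-1}$ and fix every $\tau_{1,i}$, so the polytope would be directly regular rather than chiral; the hypothesis of the lemma therefore forces the involution to sit at a \emph{middle} position $2\le i\le r-2$. Your proposed subgroup $H=\langle\sigma_2,\dots,\sigma_{r-1}\rangle$ is tailored to that impossible case $i=1$ and is the wrong candidate in general: the subgroup the lemma is about is $H=\langle\sigma_j\mid j\ne i\rangle$. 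The long digression about ``reinterpreting'' the chirality condition is a red herring --- the lemma means exactly what it says, and it is applied later (Theorem~\ref{norank7}) to middle generators arising from Schl\"afli entries equal to $2$.

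The second gap is that the one genuinely substantive step --- normality of $H$ in $G$ --- is asserted (``conjugation by $\sigma_1$ \dots preserves $H$'') but never proved. This is exactly where the string property must be used: from $\sigma_i^2=(\sigma_{i-1}\sigma_i)^2=(\sigma_i\sigma_{i+1})^2=\epsilon$ one gets $\sigma_i\sigma_{i\pm1}\sigma_i=\sigma_{i\pm1}^{-1}$, and writing $\sigma_i=\tau_{m,i-1}\tau_{m,i}$ as a product of two involutions (for $m<i-1$) shows that $\sigma_i$ commutes with each $\tau_{m,i-1}$, and dually with each $\tau_{i+1,n}$ for $n>i+1$; since these elements together with $\sigma_{i-1}$ and $\sigma_{i+1}$ generate $H$, conjugation by $\sigma_i$ preserves $H$, whence $H\trianglelefteq G=H\langle\sigma_i\rangle$. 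That computation is essentially the entire content of the paper's proof and is absent from yours. (You are right that $\sigma_i\notin H$, i.e.\ that the index is $2$ rather than $1$, deserves a word --- the paper is silent on this too --- but without first identifying the correct $H$ and proving it normal, that worry is premature.)
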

\begin{proof}
    If $G = \left\langle \sigma_1, \sigma_2, \cdots, \sigma_{r-1}\right\rangle$ gives a chiral polytope and there is $i\in\{1, \ldots, r-1\}$ such that $\sigma_i$ has order $2$ then $\left\langle \sigma_1, \sigma_2, \cdots, \sigma_{i-1},\sigma_{i+1}, \cdots, \sigma_{r-1}\right\rangle$ and $\left\langle\sigma_i\right\rangle$ gives the two subgroups which decompose $G$. Indeed, since $\sigma_i^2 = \epsilon$, $(\sigma_{i-1}\sigma_i)^2 = \epsilon$ and $(\sigma_i\sigma_{i+1})^2 = \epsilon$, we see the conjugation by $\sigma_i$ sends $\sigma_{i-1}$ and $\sigma_{i+1}$ to their inverse. Moreover, $\sigma_i$ commutes with the involutions $\sigma_m\sigma_{m+1}\cdots\sigma_{i-1}$ for $m < i -1$ and with the involutions $\sigma_{i+1} \cdots \sigma_{n-1}\sigma_n$ for $n > i +1$.
\end{proof}
Thanks to this lemma, we can now easily prove the following theorem.
\begin{theorem}\label{norank7}
The maximal rank of a chiral polytope with automorphism group $\PSL(3,q)$ is at most six.
\end{theorem}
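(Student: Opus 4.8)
The plan is to turn this into an immediate consequence of the rank-$\le 4$ classification of string $C$-groups of subgroups of $\PSL(3,q)$ carried out in Section~\ref{regular}, via the $(r-2)$-face construction of Lemma~\ref{lem:2.3}.

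So let $(\PSL(3,q), (\sigma_1,\dots,\sigma_{r-1}))$ be a $C^+$-group, with Schl\"afli type $[k_1,\dots,k_{r-1}]$. First I would record that no $\sigma_i$ can be an involution: since $\PSL(3,q)$ is a nonabelian simple group for every prime power $q\ge 2$, it has no normal subgroup of index $2$, so Lemma~\ref{notwo} rules this out. Hence $k_i\ge 3$ for every $i$.

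Now I would apply Lemma~\ref{lem:2.3}: the subgroup $G^+=\langle \tau_{1,2},\tau_{1,3},\dots,\tau_{1,r-1}\rangle\le\PSL(3,q)$, together with its displayed generators, is a directly regular string $C$-group of rank $r-2$ and Schl\"afli type $[k_3,\dots,k_{r-1}]$. By the previous paragraph this type contains no $2$, so $(G^+,(\tau_{1,2},\dots,\tau_{1,r-1}))$ is a non-degenerate string $C$-group whose automorphism group is a subgroup of $\PSL(3,q)$. But Tables~\ref{summaryeven} and~\ref{summaryodd} list all such pairs, and in every case the rank is at most $4$. Therefore $r-2\le 4$, i.e.\ $r\le 6$, which is the assertion.

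I do not anticipate a real obstacle, since the substantive content lives entirely in the classification of Section~\ref{regular}; the present proof is pure bookkeeping combining Lemma~\ref{notwo} and Lemma~\ref{lem:2.3}. The one point that genuinely has to be used is the simplicity of $\PSL(3,q)$: the classification tables only cover \emph{non-degenerate} string $C$-groups, and without excluding order-$2$ generators one could not rule out $G^+$ being a degenerate elementary abelian string $C$-group $C_2^k\le\PSL(3,2^n)$, whose rank is unbounded. Lemma~\ref{notwo} is exactly what closes that gap.
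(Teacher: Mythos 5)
Your proof is correct and takes essentially the same route as the paper: both arguments combine Lemma~\ref{lem:2.3} (a rank-$r$ chiral polytope yields a rank-$(r-2)$ directly regular polytope for a subgroup) with Lemma~\ref{notwo} plus the simplicity of $\PSL(3,q)$ to exclude order-$2$ generators, and then invoke the Section~\ref{regular} classification showing that non-degenerate string $C$-groups of subgroups of $\PSL(3,q)$ have rank at most four. The only difference is cosmetic: you rule out the degenerate types up front, whereas the paper observes afterwards that the only rank-$\geq 5$ regular polytopes for subgroups are the degenerate triangular ones in even characteristic.
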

\begin{proof}
We know that if $\left\langle \sigma_1, \cdots, \sigma_{r-1} \right\rangle$ gives a chiral polytope of type $[k_1,k_2,\cdots,k_{r-1}]$, then $\left\langle \tau_{1,2}, \tau_{1,3}, \cdots, \tau_{1,r-1} \right\rangle$ gives a directly regular polytope of type $[k_3,k_4,\cdots,k_{r-1}]$ and similarly for the dual. Therefore, any chiral polytope of rank $r$ can be constructed  from a directly regular polytope of rank $r-2$. To construct a chiral polytope of rank at least $7$ for $\PSL(3,q)$, one would need a directly regular polytope of rank at least 5 for a subgroup of $\PSL(3,q)$. The only such polytopes appear when $q$ is even, but they have a generator of order $2$ which would contradict Lemma~\ref{notwo}.
\end{proof}
Before proving Theorem~\ref{rank6}, we recall the following result due to Schulte and Weiss.
\begin{lemma}\cite[Lemma 5]{SchulteWeiss}\label{SW}
Let $G = \left\langle \sigma_1, \sigma_2, \cdots, \sigma_{r-1}\right\rangle$ be the automorphism group of a chiral polytope. Then $\sigma_i\sigma_j = \sigma_j\sigma_i$ whenever $|i-j|>2$.
\end{lemma}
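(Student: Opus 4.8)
The plan is a purely formal computation from the string property alone --- the fact that $\tau_{a,b}=\sigma_a\sigma_{a+1}\cdots\sigma_b$ is an involution whenever $a<b$ --- so that neither the intersection property $IP^+$ nor the chirality hypothesis is needed. Fix indices $i<j$ with $j-i\geq 3$ (the case $i>j$ being symmetric). The computation revolves around the four products $\tau_{i,j}$, $\tau_{i,j-1}$, $\tau_{i+1,j}$ and $\tau_{i+1,j-1}$: the first three are involutions because $i<j-1$ and $i+1<j$, while $\tau_{i+1,j-1}$ is an involution exactly because $i+1<j-1$, that is, because $|i-j|>2$ --- this is the single point at which the hypothesis is used.

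First I would extract the relation $\sigma_j\,\tau_{i,j-1}\,\sigma_j=\tau_{i,j-1}$ by expanding $\epsilon=\tau_{i,j}^{2}=(\tau_{i,j-1}\sigma_j)^{2}$ and multiplying on the left by the involution $\tau_{i,j-1}$. Substituting $\tau_{i,j-1}=\sigma_i\,\tau_{i+1,j-1}$ and moving $\tau_{i+1,j-1}$ (an involution) and $\sigma_j$ across to the right-hand side yields
\[ \sigma_j\sigma_i=\sigma_i\,\tau_{i+1,j-1}\,\sigma_j^{-1}\,\tau_{i+1,j-1}. \]
It then remains only to check that the inner expression $\tau_{i+1,j-1}\,\sigma_j^{-1}\,\tau_{i+1,j-1}$ collapses to $\sigma_j$. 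From $\tau_{i+1,j}=\tau_{i+1,j-1}\sigma_j$ one gets $\sigma_j^{-1}=\tau_{i+1,j}\,\tau_{i+1,j-1}$ (both factors being involutions), whence $\tau_{i+1,j-1}\,\sigma_j^{-1}\,\tau_{i+1,j-1}=\tau_{i+1,j-1}\,\tau_{i+1,j}=\sigma_j$ after using $\tau_{i+1,j-1}^{2}=\epsilon$ twice. Feeding this back into the displayed identity gives $\sigma_j\sigma_i=\sigma_i\sigma_j$.

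This argument is entirely routine and I do not expect any genuine obstacle; the one thing to keep track of is which of the products $\tau_{a,b}$ are involutions, the crucial one being $\tau_{i+1,j-1}$. It is an involution precisely when $|i-j|>2$, and it degenerates to the lone generator $\sigma_{i+1}$ (not an involution) when $|i-j|=2$, which is why the bound in the statement cannot be improved.
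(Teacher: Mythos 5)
Your computation is correct: each of $\tau_{i,j}$, $\tau_{i,j-1}$, $\tau_{i+1,j}$ and $\tau_{i+1,j-1}$ is indeed an involution under the hypothesis $|i-j|>2$ (the last one precisely because $i+1<j-1$), and the chain $\sigma_j\tau_{i,j-1}\sigma_j=\tau_{i,j-1}$, followed by the collapse $\tau_{i+1,j-1}\sigma_j^{-1}\tau_{i+1,j-1}=\sigma_j$, does yield $\sigma_i\sigma_j=\sigma_j\sigma_i$ using only the string property. The paper does not prove this lemma but merely cites Schulte--Weiss, and your argument is the standard derivation from the relations $(\sigma_i\cdots\sigma_j)^2=\epsilon$, including the correct observation of why the bound $|i-j|>2$ is sharp.
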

\begin{proof}[Proof of Theorem~\ref{rank6}]
Suppose $G\langle \sigma_1, \ldots, \sigma_{r-1}\rangle \leq \PSL(3,q)$ is the automorphism group of a chiral polytope $\mathcal{P}$ of rank $r$.
By Theorem~\ref{norank7}, we know that $r\leq 6$.
Suppose then that $r = 6$.
For a chiral polytope of rank $6$, one would need a directly regular polytope of rank $4$ without generator of order $2$ as $4$-facet. By Tables~\ref{summaryeven} and~\ref{summaryodd}, the only possible Schl\"afli types for $\mathcal{P}$ are therefore as follows.
\begin{itemize}
    \item for $q$ even, $[4,k,4,k',4]$
    \item for $q$ odd, $[p,2p,p,2p,p]$
    \item for $p = 5$, $[3,3,3,3,3]$
\end{itemize}
The last case gives the alternating group. Therefore, we only have to consider the first two cases.

Suppose $q$ even. Then, as it is shown in Section~\ref{qeven}, we can suppose that
\[ \def\arraystretch{1}
\tau_{1,2} = \begin{pmatrix} 1 & 0 & 0 \\ 0 & 1 & 1 \\ 0 & 0 & 1 \end{pmatrix},
\tau_{1,3} = \begin{pmatrix} 1 & 0 & 0 \\ 0 & 1 & 0 \\ a & 0 & 1 \end{pmatrix},
\tau_{1,4} = \begin{pmatrix} 1 & 0 & b \\ 0 & 1 & 0 \\ 0 & 0 & 1 \end{pmatrix},
\tau_{1,5} = \begin{pmatrix} 1 & 0 & 0 \\ 1 & 1 & 0 \\ 0 & 0 & 1 \end{pmatrix}. \]
Note that the point $(0,1,0)$ belongs to the four axes and that if we take the axes of two consecutive generators, it is the only point they have in common. But then
\[ \def\arraystretch{1}
\tau_{4,5} = \begin{pmatrix} 1 & 0 & 0 \\ 1 & 1 & 0 \\ a & 0 & 1 \end{pmatrix} \text{ and }
\tau_{3,5} = \begin{pmatrix} 1 & 0 & 0 \\ 1 & 1 & 1 \\ 0 & 0 & 1 \end{pmatrix}\]
have two axes with only the point $(0,1,0)$ in common. It means that the other generators of the regular polytope preserve the point $(0,1,0)$. Therefore, our generated subgroup preserves this point and cannot be the group $\PSL(3,q)$ nor $\PSU(3,\sqrt{q})$ (assuming $q$ is a square).

Suppose $q$ odd. By the results obtained in Section~\ref{qodd}, we can assume
\[ \def\arraystretch{1}
\tau_{1,2} = \begin{pmatrix} 1 & 0 & 0 \\ 0 & -1 & 0 \\ 0 & 0 & -1 \end{pmatrix},
\tau_{1,3} = \begin{pmatrix} 1 & 0 & 0 \\ 1 & -1 & 0 \\ 0 & 0 & -1 \end{pmatrix},\]
\[\tau_{1,4} = \begin{pmatrix} -1 & 0 & 0 \\ 0 & -1 & 0 \\ 0 & 1 & 1 \end{pmatrix},
\tau_{1,5} = \begin{pmatrix} -1 & 0 & 0 \\ 0 & -1 & 0 \\ 0 & 0 & 1 \end{pmatrix} \]
We get
\[ \def\arraystretch{1}
\tau_{4,5} = \begin{pmatrix} -1 & 0 & 0 \\ -1 & 1 & 0 \\ 0 & 0 & -1 \end{pmatrix},
\tau_{3,5} = \begin{pmatrix} -1 & 0 & 0 \\ 0 & 1 & 0 \\ 0 & 0 & -1 \end{pmatrix}\]
Then, $\tau_{2,5}$ has the same axis as $\tau_{1,5}$ and commute with $\tau_{4,5}$. This gives
\[ \def\arraystretch{1}
\tau_{2,5} = \begin{pmatrix} -1 & 0 & 2a \\ 0 & -1 & a \\ 0 & 0 & 1 \end{pmatrix}\]
This yields
\[ \def\arraystretch{1}
\sigma_1 = \begin{pmatrix} 1 & 0 & -2a \\ 0 & 1 & -a \\ 0 & 0 & 1 \end{pmatrix},
\text{ and }
\sigma_5 = \begin{pmatrix} 1 & 0 & 0 \\ 0 & 1 & 0 \\ 0 & -1 & 1 \end{pmatrix} \]
As $\sigma_1$ and $\sigma_5$ do not commute, they contradict Lemma~\ref{SW}. Therefore, there is no chiral polytope of rank $6$ for $\PSL(3,q)$ or any of its subgroups. 
\end{proof}
\section{Conclusion and open problems}\label{conclusion}
As we explained in the introduction, it is known which finite simple groups are automorphism groups of abstract regular polytopes~\cite{Adrien}.
Our result is a contribution towards getting a similar result for chiral polytopes.

The following groups are known to be automorphism groups of at least one chiral polytope:
all finite simple groups except $\Alt(7)$, $\PSL(3,q)$ with $q\leq 4$, $\PSU(3,q)$ with $q\neq 11$, $\PSL(2,q)$ with $q\equiv 3 \mod 4$ and $q$ not satisfying any of the conditions of~\cite[Theorem 1, case (4)]{LM2017}.

This paper's contribution to the above is for the groups $\PSL(3,q)$.
Indeed, our Theorem~\ref{rank4} shows that for every $q\geq 5$, the group $\PSL(3,q)$ is the automorphism group of at least one chiral polytope of rank four.

The following groups are known not to be automorphism groups of at least one chiral polytope:
$\Alt(7)$, $\PSL(2,q)$ with $q\in \{4, 5, 7, 9, 11, 23, 27, 43, 47, 67, 81, 83
\}$ (see~\cite[Table 2]{LM2017}), $\PSL(3,q)$ with $q\leq 9$~\cite{BLT2019}.

The following groups are not known to be or not to be automorphism groups of at least one chiral polytope:
$\PSU(3,q)$ with $q\geq 11$, $\PSL(2,q)$ with $q=p^d \equiv 3 \mod 4$ and $d>1$ not a prime power.

Based on the data collected in~\cite{BLT2019}, there are chiral polytopes of rank five for $\PSU(3,11)$. All groups $\PSU(3,q)$ with $q < 11$ as well as $q=13$ or $16$ have no chiral polytopes.
Our Theorem~\ref{rank6} combined with the results in~\cite{BC2019} show that if a group $PSU(3,q)$ is the automorphism group of a chiral polytope, this polytope has either rank four or five.

For the open cases for $\PSL(2,q)$, we refer to the Conjecture made by Leemans, Moerenhout and O'Reilly-Regueiro in~\cite{LM2017}.
\bibliographystyle{amsplain}

\end{document}